\begin{document}

\newcommand{\defi}{\stackrel{\Delta}{=}}
\newcommand{\qed}{\hphantom{.}\hfill $\Box$\medbreak}
\newcommand{\A}{{\cal A}}
\newcommand{\B}{{\cal B}}
\newcommand{\U}{{\cal U}}
\newcommand{\G}{{\cal G}}
\newcommand{\cZ}{{\cal Z}}
\newcommand{\proof}{\noindent{\bf Proof \ }}
\newcommand\one{\hbox{1\kern-2.4pt l }}
\newcommand{\Item}{\refstepcounter{Ictr}\item[(\theIctr)]}
\newcommand{\QQ}{\hphantom{MMMMMMM}}

\newtheorem{theorem}{Theorem}[section]
\newtheorem{lemma}{Lemma}[section]
\newtheorem{pro}{Proposition}[section]
\newtheorem{teorem}{Theorem}[section]
\newtheorem{corollary}{Corollary}[section]
\newtheorem{definition}{Definition}[section]
\newtheorem{remark}{Remark}[section]
\newtheorem{assumption}{Assumption}[section]
\newtheorem{example}{Example}[section]
\newenvironment{cproof}
{\begin{proof}
 [Proof.]
 \vspace{-3.2\parsep}}
{\renewcommand{\qed}{\hfill $\Diamond$} \end{proof}}
\newcommand{\erhao}{\fontsize{21pt}{\baselineskip}\selectfont}
\newcommand{\xiaoerhao}{\fontsize{18pt}{\baselineskip}\selectfont}
\newcommand{\sanhao}{\fontsize{15.75pt}{\baselineskip}\selectfont}
\newcommand{\sihao}{\fontsize{14pt}{\baselineskip}\selectfont}
\newcommand{\xiaosihao}{\fontsize{12pt}{\baselineskip}\selectfont}
\newcommand{\wuhao}{\fontsize{10.5pt}{\baselineskip}\selectfont}
\newcommand{\xiaowuhao}{\fontsize{9pt}{\baselineskip}\selectfont}
\newcommand{\liuhao}{\fontsize{7.875pt}{\baselineskip}\selectfont}
\newcommand{\qihao}{\fontsize{5.25pt}{\baselineskip}\selectfont}

\makeatletter
\newcommand{\figcaption}{\def\@captype{figure}\caption}
\newcommand{\tabcaption}{\def\@captype{table}\caption}
\makeatother

\newcounter{Ictr}

\renewcommand{\theequation}{
\arabic{equation}}
\renewcommand{\thefootnote}{\fnsymbol{footnote}}

\def\A{\mathcal{A}}

\def\C{\mathcal{C}}

\def\V{\mathcal{V}}

\def\I{\mathcal{I}}

\def\Y{\mathcal{Y}}

\def\X{\mathcal{X}}

\def\J{\mathcal{J}}

\def\Q{\mathcal{Q}}

\def\W{\mathcal{W}}

\def\S{\mathcal{S}}

\def\T{\mathcal{T}}

\def\L{\mathcal{L}}

\def\M{\mathcal{M}}

\def\N{\mathcal{N}}
\def\R{\mathbb{R}}
\def\H{\mathbb{H}}


\begin{center}
\topskip2cm
\LARGE{\bf Gradient Support Projection Algorithm for Affine Feasibility Problem with Sparsity and Nonnegativity}
\end{center}
\begin{center}
\renewcommand{\thefootnote}{\fnsymbol{footnote}}Lili Pan$^{1,2}$,~Naihua Xiu$^{1}$, ~Shenglong Zhou$^{1}$\footnote{Corresponding author: Shenglong Zhou ( longnan\_zsl@163.com); Other two authors: Lili Pan (panlili1979@163.com), Naihua Xiu (nhxiu@bjtu.edu.cn). Time: June 27, 2014. }\\
{\small
1. Department of Applied Mathematics
Beijing Jiaotong University, Beijing 100044, P. R. China\\
2. Department of Mathematics, Shandong University of Technology,
Zibo 255049, P .R. China}

\end{center}

\vskip12pt

\begin{abstract}

Let $A$ be a real $M\times N$ measurement matrix and $b\in \mathbb{R}^M$ be an observations vector.
The affine feasibility problem with sparsity and nonnegativity ($AFP_{SN}$ for short) is to find a
sparse and nonnegative vector $x\in \mathbb{R}^N$ with $Ax=b$ if such $x$ exists. In this paper, we focus on establishment of optimization approach to solving the $AFP_{SN}$. By discussing tangent cone and normal cone of sparse constraint, we give the first necessary optimality conditions, $\alpha$-Stability, T-Stability and N-Stability, and the second necessary and sufficient optimality conditions for the related minimization problems with the $AFP_{SN}$. By adopting Armijo-type stepsize rule, we present a framework of gradient support projection algorithm for the $AFP_{SN}$ and prove its full convergence when matrix A is s-regular. By doing some numerical experiments, we show the excellent performance of the new algorithm for the $AFP_{SN}$ without and with noise.

\end{abstract}
\noindent{\bf Keywords:} {affine feasibility problem; sparsity and nonnegativity; gradient support projection algorithm; s-regularity; numerical experiment}
\section{Introduction }

In this paper, we mainly study an optimization approach to solving the affine feasibility problem
with sparsity and nonnegativity ($AFP_{SN}$) defined by
\begin{equation}
\textrm{Find~the~ vector}~x\in \mathbb{R}^N~ \textrm{with} ~x\geq 0,~ \|x\|_0\leq s~
\textrm{such ~that} ~Ax=b
\end{equation}
if such $x$ exists, where $A\in \mathbb{R}^{M\times N}$, $b\in \mathbb{R}^M$, $s<M<N$ and $\|x\|_0$ is the $l_0$-norm of $x$,
which refers to the number of non-zero elements in the vector $x$.
Vector $x$ is said to be $s$-sparse if $\|x\|_0\leq s$.
For $x=(x_1, \cdots, x_N)^T,y=(y_1, \cdots, y_N)^T\in \mathbb{R}^{N}$, $x\geq y$ stands for $x_i\geq y_i, i=1,2,\cdots,N$.

This is a class of inverse problems and has been popular
for several years due to their applications in signal and image processing \cite{Bruckstein09,Donoho05}, machine learning \cite{He11}
and pattern recognition \cite{Blumensath06}, and so on.  
For example, in many real-world problems the underlying parameters $x$ represent
quantities that can take on only nonnegative values, e.g., pixel intensities, frequency counts. In such cases, sparse affine feasibility problem must include nonnegative constraint on the model parameters $x$.

Usually, the $AFP_{SN}$ is reformulated as the following optimization problem:
\begin{eqnarray}\label{p}
&\min&\frac{1}{2}\|Ax-b\| ^2 \nonumber \\
&s.t.&  \|x\|_0\leq s, x \geq 0.
\end{eqnarray}
Let $S\triangleq \{x\in \mathbb{R}^N|~\|x\|_0\leq s\}$, then the feasible region of (\ref{p}) is denoted as $S \cap \mathbb{R}_+ ^N$; here, $\|\cdot\|$ is $l_2$-norm.

Greedy methods for (\ref{p}) without nonnegativity have recently attracted much attention.
One advantage of greedy methods is that they are generally faster than the relaxation approaches,
and they can also be used to recover signals with more complex structures than sparsity such as tree sparse signals \cite{Braniuk10}.
Another advantage of these methods is that many of them have stable recovery properties under certain
conditions \cite{Cartis13}. A variety of greedy methods have been proposed
to tackle the so-called $l_0$-problem, such as matching pursuit (MP) \cite{Mallat93},
orthogonal MP(OMP) \cite{Davis97}, compressive sampling matching pursuit (CoSaMP) \cite{Needell09} and
Subspace pursuit(SP) \cite{Dai09}. In \cite{Bahmani13}, CoSaMP algorithm was extended to the objective function with arbitrary form.
More recently, iterative hard thresholding algorithm (IHT) was proposed in \cite{Blumensath08}.
Here, Beck et.al \cite{Beck13} showed that the limit points of the algorithm are $L$-stationary points if fixed stepsize $1/L$ is smaller than $\frac{1}{\lambda_{\max}(A^TA)}$.
Blumensath \cite{Blumensath09} proposed an involved line-search method
 -- normalised IHT (NIHT)-- to adaptively select the stepsize per iteration.
 Cartis and Thompson \cite{Cartis13} considered the convergence of IHT and NIHT from the aspect of recovery analysis \cite{Cartis13}.
Foucart \cite{Foucart11} combined IHT and CoSaMP getting hard
thresholding pursuit algorithm (HTP). 
While less effort has been made in sparsity and nonnegativity constraints simultaneously.

In this paper, we adopt a support projection method
to solve this type of NP-hard problem starting from the iterative methods.
Firstly, we study the tangent cone and normal cone of the sparse set under the Bouligand and Clarke concepts respectively.
We propose three kinds of stability for sparsity constrained problems
and analyze the relationship among them, which is $\alpha$-Stability, T-Stability and N-Stability. We
show that $\alpha$-stability is most rigorous than the others.
We also give the second order optimality condition for the same optimality problem under the concept of Clarke tangent cone.
Secondly, we present a gradient support projection algorithm with Armijo-type's stepsize (GSPA) and prove the full convergent properties of
the new algorithm
under the condition that matrix $A$ is $s$-regular.
At last, numerical experiments demonstrate that GSPA performs very steadily whether for
recovery without or with noise and is most time-saving compared with other three methods -- NIHT, CoSaMP and SP.

This paper is organized as follows. Section 2 studies the first and second order optimality conditions for a general sparse optimization model.
Section 3 considers the corresponding results in Section 2 for sparsity and nonnegativity constrained problem (\ref{p}).
Section 4 gives the gradient support projection algorithm with Armijo-type stepsize and proves the convergence.
Section 5 tests the performance of the new method. The last section gives some concluding remarks.

\section{Optimality Conditions for Nonlinear Case}\label{Section2}
In this section, we study the first and second order optimality conditions for the following sparsity constrained nonlinear model:
\begin{eqnarray}\label{f-l-0}
\min~~f(x),~~~~~\textup{s.t.}~~x\in S,\end{eqnarray}
where $f(x):\mathbb{R}^N\rightarrow \mathbb{R}$ is once or twice continuously differentiable.

We first consider the projection on sparse set $S$. For $S\subset \mathbb{R}^N$ being nonempty and closed, we call the mapping $\textmd{P}_S \rightrightarrows S$ the
projector onto $S$ if
$$
\textmd{P}_{S}(x):=\arg \min_{y\in S}\|x-y\|.
$$
As $S$ is nonconvex, the orthogonal projection operator $\textmd{P}_{S}(x)$ is not single-valued. It is well known that the sparse projection $\textmd{P}_{S}(x)$ sets all but $s$ largest (in magnitude) elements of $x$ to zero.
If there is no unique such set, a set can be selected either randomly or according to some predefined ordering.
We define $I_s(x):=\{j_1,j_2,\cdots,j_s\}\subseteq\{1,2,\cdots,N\}$ of indices of
$x$ with $\min_{i\in I_s(x)}|x_i|\geq \max_{i\notin I_s(x)}|x_i|$. Then
\begin{equation*}
\textmd{P}_S(x)=\left\{~y\in\mathbb{R}^{N}\Big| y_i=\left\{\begin{array}{ll}
                                                          x_i,&i\in I_s(x), \\
                                                         0,&i \notin I_s(x).
                                                       \end{array}
\right.\right\}
\end{equation*}


\subsection{Tangent Cone and Normal Cone}
Recalling that for any nonempty set $\Omega\subseteq \mathbb{R}^{N}$, its \emph{Bouligand Tangent Cone} $T^B_{\Omega}(\overline{x})$,  \emph{Clarke Tangent Cone} $T^C_{\Omega}(\overline{x})$ and corresponding \emph{Normal Cones} $N^B_{\Omega}(\overline{x})$ and $N^C_{\Omega}(\overline{x})$ at point $\overline{x}\in\Omega$ are defined as \cite{Rockafellar98}:
\begin{equation}\label{Tangent-Normal-Cone}
\begin{array}{l}
T^B_{\Omega}(\overline{x}):=  \left\{~d\in\mathbb{R}^{N}~\left|\right.~\exists~\{x^k\}\subset \Omega, \underset{k\rightarrow \infty}{\lim}x^k=\overline{x},~\lambda_k\geq0,~k=1,2,\cdots~
~\text{such~that}~\underset{k\rightarrow \infty}{\lim}\lambda_k(x^k-\overline{x})=d~\right\},\\
T^C_{\Omega}(\overline{x}):=  \left\{~d\in\mathbb{R}^{N}~\Big|\begin{array}{r}
                                                        ~\text{For}~\forall~\{x^k\}\subset \Omega,~\forall~\{\lambda_k\}\subset\mathbb{R}_+ ~\text{with}~\underset{k\rightarrow \infty}{\lim}x^k=\overline{x},~\lim_{k\rightarrow\infty}\lambda_k=0, \\
                                                        \exists~\{y^k\}~\text{such~that}~\underset{k\rightarrow \infty}{\lim}y^k=d~\text{and}~x^k+\lambda_{k}y^k\in \Omega,~ k=1,2,\cdots~  \end{array}\right\},\\

N^B_{\Omega}(\overline{x}):= \left\{~d\in\mathbb{R}^{N}~|~\langle d, z\rangle\leq0,~\forall~z\in T^B_{\Omega}(\overline{x})~\right\},\\
N^C_{\Omega}(\overline{x}):= \left\{~d\in\mathbb{R}^{N}~|~\langle d, z\rangle\leq0,~\forall~z\in T^C_{\Omega}(\overline{x})~\right\}.
\end{array}\end{equation}
\begin{theorem}\label{theorem-Btan-Bnor} For any $\overline{x}\in S$ and letting $\Gamma=\textup{supp}(\overline{x})$, the Bouligand tangent cone and corresponding normal cone of ~$S$ at $\overline{x}$ are
\begin{eqnarray}\label{BTangent-Cone-S}
T^B_{S}(\overline{x})&=&\{~d\in\mathbb{R}^{N}~|~\|d\|_0\leq s,~\|\overline{x}+\mu d\|_0\leq s~,\forall~\mu\in\mathbb{R}~\}\\
\label{BTangent-Cone-S1}&=& \bigcup_{\Upsilon}~\textup{span}\left\{~\mathrm{e}_i,~~ i\in\Upsilon\supseteq\Gamma, |\Upsilon|\leq s ~\right\}\\
\label{BNormal-Cone-S}N^B_{S}(\overline{x})&=& \left\{\begin{array}{ll}
                                   \left\{~d\in \mathbb{R}^N|d_i=0,~ i\in\Gamma~\right\}=\textup{span}\left\{~\mathrm{e}_i,~~ i\notin\Gamma~\right\}, & \textup{if}~~|\Gamma|=s\\
                                   \{0\}, & \textup{if}~~|\Gamma|<s
                                 \end{array}\right.\end{eqnarray}
where $\mathrm{e}_i\in\mathbb{R}^{N}$ is a vector whose the $i$th component is one and others are zeros,
$\textup{span}\{\mathrm{e}_i,i\in \Gamma\}$ denotes the subspace of $\mathbb{R}^{N}$ spanned by $\{~\mathrm{e}_i,~i\in \Gamma\}$, and
$\textup{supp}(x)=\{i\in \{1,\cdots,N\}~|~ x_i\neq 0\}$.
\end{theorem}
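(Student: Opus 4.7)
The strategy is to establish (\ref{BTangent-Cone-S}) and (\ref{BTangent-Cone-S1}) together by first characterizing $T^B_{S}(\overline{x})$ via the union-of-subspaces form (\ref{BTangent-Cone-S1}), and then checking that the algebraic condition in (\ref{BTangent-Cone-S}) singles out the same set. The normal cone identity (\ref{BNormal-Cone-S}) will then follow by a case analysis on $|\Gamma|$.

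For the inclusion $T^B_{S}(\overline{x}) \subseteq \bigcup_{\Upsilon}\textup{span}\{\mathrm{e}_i : i \in \Upsilon\}$, I would take $d \in T^B_{S}(\overline{x})$ with witnessing sequences $\{x^k\}\subseteq S$ and $\{\lambda_k\}\subseteq \mathbb{R}_+$, and exploit two elementary observations: (i) since $x^k\to \overline{x}$ and $\overline{x}_i\neq 0$ for every $i\in \Gamma$, one has $\Gamma\subseteq \textup{supp}(x^k)$ for all sufficiently large $k$; and (ii) the family of subsets of $\{1,\dots,N\}$ of cardinality at most $s$ is finite, so along a subsequence $\textup{supp}(x^k)\equiv \Upsilon$ for some fixed $\Upsilon\supseteq \Gamma$ with $|\Upsilon|\leq s$. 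Consequently each $x^k-\overline{x}$, and hence the limit $d$, is supported in $\Upsilon$. For the reverse inclusion, given any admissible $\Upsilon$ and $d\in \textup{span}\{\mathrm{e}_i : i\in \Upsilon\}$, the sequence $x^k := \overline{x}+(1/k)d$ remains in $S$ (its support lies in $\Upsilon$) and the scales $\lambda_k := k$ give $\lambda_k(x^k-\overline{x}) = d$, exhibiting $d \in T^B_{S}(\overline{x})$.

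The equivalence of (\ref{BTangent-Cone-S}) and (\ref{BTangent-Cone-S1}) is then routine: any $d$ supported in $\Upsilon\supseteq \Gamma$ with $|\Upsilon|\leq s$ trivially satisfies both $\|d\|_0\leq s$ and $\|\overline{x}+\mu d\|_0\leq s$ for every $\mu$, while conversely, choosing $\mu\neq 0$ small enough that no coordinate of $\overline{x}+\mu d$ on $\Gamma$ cancels yields $\textup{supp}(\overline{x}+\mu d)=\Gamma\cup \textup{supp}(d)$, so the hypothesis forces $|\Gamma\cup \textup{supp}(d)|\leq s$ and $\Upsilon:=\Gamma\cup \textup{supp}(d)$ witnesses the membership in the union form.

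For (\ref{BNormal-Cone-S}) I would split on $|\Gamma|$. When $|\Gamma|=s$, the constraint $\Upsilon\supseteq \Gamma$, $|\Upsilon|\leq s$ forces $\Upsilon = \Gamma$, so $T^B_{S}(\overline{x})$ collapses to the single subspace $\textup{span}\{\mathrm{e}_i : i\in \Gamma\}$, whose polar is the orthogonal complement $\textup{span}\{\mathrm{e}_i : i\notin \Gamma\}$. When $|\Gamma|<s$, the set $\Upsilon = \Gamma\cup\{j\}$ is admissible for every $j\notin \Gamma$, so $\pm\mathrm{e}_j\in T^B_{S}(\overline{x})$ for all $j$, forcing any normal direction to annihilate every standard basis vector and hence to vanish. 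The one delicate step, in my view, is the pigeonhole/subsequence extraction that converts the cardinality bound on $\textup{supp}(x^k)$ into a single fixed index set $\Upsilon$; once that is in hand, everything reduces to bookkeeping about supports.
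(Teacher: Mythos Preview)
Your argument is correct and follows essentially the same route as the paper: the paper also verifies the tangent-cone identity by checking the two inclusions via the same witnessing sequence $x^k=\overline{x}+d/\lambda_k$ for one direction and the observation $\Gamma\subseteq\textup{supp}(x^k)$ for large $k$ for the other, and handles the normal cone by the same case split on $|\Gamma|$. The only cosmetic difference is that the paper establishes the description (\ref{BTangent-Cone-S}) first and declares its equivalence with (\ref{BTangent-Cone-S1}) ``not difficult,'' whereas you go through (\ref{BTangent-Cone-S1}) first; your explicit pigeonhole extraction of a fixed $\Upsilon$ is in fact a slightly cleaner version of the paper's step ``$\textup{supp}(d)\subseteq\textup{supp}(x^k)$.''
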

\begin{proof} It is not difficult to verify that the right hand of (\ref{BTangent-Cone-S}) is equal to (\ref{BTangent-Cone-S1}), and thus we only prove  (\ref{BTangent-Cone-S}). First we prove $T^B_{S}(\overline{x})\subseteq\{~d\in\mathbb{R}^{N}~|~\|d\|_0\leq s,~\|\overline{x}+\mu d\|_0\leq s~,\forall~\mu\in\mathbb{R}~\}$. For $\forall~d\in T^B_{S}(\overline{x})$, there is $\lim_{k\rightarrow\infty}x^k=\overline{x},~x^k\in S,~\lambda_k\geq0$ satisfies $d=\lim_{k\rightarrow\infty}\lambda_k(x^k-\overline{x})$. Since $\lim_{k\rightarrow\infty}x^k=\overline{x}$, there is $k_0$ when $k\geq k_0$, $\Gamma\subseteq\textup{supp}(x^k)$. In addition, $d=\lim_{k\rightarrow\infty}\lambda_k(x^k-\overline{x})$ derives $\textup{supp}(d)\subseteq\textup{supp}(x^k)$, which combining with  $\|x^k\|_0\leq s$ when $k\geq k_0$ and $\Gamma\subseteq\textup{supp}(x^k)$ yields that $\|d\|_0\leq s$ and $\|\overline{x}+\mu d\|_0\leq s~,\forall~\mu\in\mathbb{R}$.
Next we prove $T^B_{S}(\overline{x})\supseteq\{~d\in\mathbb{R}^{N}~|~\|d\|_0\leq s, \|\overline{x}+\mu d\|_0\leq s~,\forall~\mu\in\mathbb{R}~\}$. For $\forall~\|d\|_0\leq s, \|\overline{x}+\mu d\|_0\leq s~,\forall~\mu\in\mathbb{R}$, we take any sequence $\{\lambda_k\}$ such that $\lambda_k>0$ and $\lambda_k\rightarrow+\infty$. Then by defining $\{x^k\}$ with $x^k=\overline{x}+d/\lambda_k$, evidently $x^k\in S$, $\lim_{k\rightarrow\infty}x^k=\overline{x}$, and $d=\lim_{k\rightarrow\infty}\lambda_k(x^k-\overline{x}),$ which implies $d\in T^B_{S}(\overline{x})$.

For (\ref{BNormal-Cone-S}), by the definition of $N^B_{S}(\overline{x})$, we obtain
\begin{eqnarray}\label{Normal-Cone-SS}N^B_{S}(\overline{x})&=&
  \{~d\in\mathbb{R}^{N}~|~\langle d, z\rangle\leq0,~\forall~z\in T^B_{S}(\overline{x})~\}\nonumber\\
  &=&\{~d\in\mathbb{R}^{N}~|~\langle d, z\rangle\leq0,~\|z\|_0\leq s, \|\overline{x}+\mu z\|_0\leq s~,\forall~\mu\in\mathbb{R}~\}.\end{eqnarray}
If $|\Gamma|=s$, it yields $\textup{supp}(z)\subseteq\Gamma$ for any $z\in T^B_{S}(\overline{x})$. Then
\begin{eqnarray*}d\in N^B_{S}(\overline{x})\Longleftrightarrow \langle d, z\rangle\leq0,~\forall~\textup{supp}(z)\subseteq\Gamma
\Longleftrightarrow d_i\left\{ \begin{array}{ll}
                                                                                              =0,&i\in\Gamma, \\
                                                                                              \in\mathbb{R},& i\notin\Gamma.
                                                                                            \end{array}\right.\Longleftrightarrow d\in\textup{span}\left\{~e_i,~~ i\notin\Gamma~\right\}.
~\end{eqnarray*}
If $|\Gamma|<s$, we will prove $ N^B_{S}(\overline{x})=\{0\}$.
Assume $d\in N^B_{S}(\overline{x})$, we take $z=d_{i_0}\mathrm{e}_{i_0}$,$\forall i_0\in \{1,2,\cdots\}$, then $z\in T^B_{S}(\overline{x})$ since $|\Gamma|<s$.
By $\langle d, z\rangle=d_{i_0}^2\leq 0$, we can obtain $d_{i_0}=0$.
The arbitrariness of $i_0$ yields that $d=0$, henceforth $ N^B_{S}(\overline{x})=\{0\}$.\qed\end{proof}
\begin{theorem}\label{lemma-Ctan-Cnor} For any $\overline{x}\in S$ and letting $\Gamma=\textup{supp}(\overline{x})$, then the Clarke tangent cone and corresponding normal cone of $S$ at $\overline{x}$ are
\begin{eqnarray}
T^C_{S}(\overline{x})\label{CTangent-Cone-S}&=&\{~d\in\mathbb{R}^{N}~|~\textup{supp}(d)\subseteq\Gamma~\}
=\textup{span}\left\{~\mathrm{e}_i,~~ i\in\Gamma~\right\}\\
\label{CNormal-Cone-S}N^C_{S}(\overline{x})&=& \textup{span}\left\{~\mathrm{e}_i,~~ i\notin\Gamma~\right\}.\end{eqnarray}
\end{theorem}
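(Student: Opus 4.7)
The plan is to establish the tangent cone formula by proving both inclusions and then derive the normal cone formula by duality, since a linear subspace's polar is simply its orthogonal complement. For the easy inclusion $\textup{span}\{\mathrm{e}_i : i \in \Gamma\} \subseteq T^C_S(\overline{x})$, I take any $d$ with $\textup{supp}(d) \subseteq \Gamma$ together with arbitrary sequences $\{x^k\} \subset S$ and $\lambda_k \to 0^+$ satisfying $x^k \to \overline{x}$, and define $y^k_i = d_i$ when $i \in \textup{supp}(x^k)$ and $y^k_i = 0$ otherwise. Then $\textup{supp}(y^k) \subseteq \textup{supp}(x^k)$ gives $\textup{supp}(x^k + \lambda_k y^k) \subseteq \textup{supp}(x^k)$, so $x^k + \lambda_k y^k \in S$. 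For $k$ large enough, the nonzero entries of $\overline{x}$ persist in $x^k$, which forces $\Gamma \subseteq \textup{supp}(x^k)$, so $y^k = d$ eventually and hence $y^k \to d$.

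For the harder inclusion $T^C_S(\overline{x}) \subseteq \textup{span}\{\mathrm{e}_i : i \in \Gamma\}$, I argue by contradiction. Suppose $d \in T^C_S(\overline{x})$ yet $d_j \neq 0$ for some $j \notin \Gamma$; I exhibit sequences for which no valid $y^k \to d$ can exist. If $|\Gamma| = s$, take $x^k \equiv \overline{x}$ and $\lambda_k = 1/k$. If $|\Gamma| < s$, choose a set $T \subseteq \{1,\ldots,N\} \setminus (\Gamma \cup \{j\})$ of cardinality $s - |\Gamma|$ (which exists because $s < N$), and set $x^k = \overline{x} + (1/k)\sum_{l \in T}\mathrm{e}_l$ together with $\lambda_k = 1/k^2$. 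In either case $x^k \in S$ and $x^k \to \overline{x}$. Any candidate $y^k$ with $y^k \to d$ is bounded, so for each $i \in \Gamma$ the coordinate $(x^k + \lambda_k y^k)_i$ tends to $\overline{x}_i \neq 0$; for each $l \in T$ one has $(x^k + \lambda_k y^k)_l = 1/k + (1/k^2)y^k_l$, which is nonzero for large $k$; and $(x^k + \lambda_k y^k)_j = \lambda_k y^k_j$ is also nonzero for large $k$ since $y^k_j \to d_j \neq 0$. Thus the support of $x^k + \lambda_k y^k$ eventually contains $\Gamma \cup T \cup \{j\}$, with cardinality $s + 1$, contradicting $x^k + \lambda_k y^k \in S$.

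The normal cone formula then follows at once: since $T^C_S(\overline{x}) = \textup{span}\{\mathrm{e}_i : i \in \Gamma\}$ is a linear subspace, its polar coincides with its orthogonal complement $\textup{span}\{\mathrm{e}_i : i \notin \Gamma\}$. I expect the main obstacle to be the $|\Gamma| < s$ case of the contradiction above: one must exploit the sparsity slack by padding $x^k$ with a perturbation on $T$ whose magnitude vanishes more slowly than $\lambda_k$ (rate $1/k$ versus step scale $1/k^2$), thereby blocking any "support swap" at an index of $T$ by a bounded sequence $y^k$. This is precisely the effect that shrinks the Clarke cone below the Bouligand cone of Theorem \ref{theorem-Btan-Bnor} in the under-saturated case, and it is the only place where the argument really uses that the Clarke definition quantifies universally over approaching sequences $\{x^k\}$.
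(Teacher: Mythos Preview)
Your proof is correct and follows essentially the same strategy as the paper: establish both inclusions for the tangent cone (the hard one by padding $x^k$ to exact sparsity $s$ on a support that avoids the offending index $j$, then showing any $y^k\to d$ forces an $(s+1)$-st nonzero coordinate), and obtain the normal cone as the orthogonal complement of the resulting subspace. Your two-scale construction ($x^k$ perturbed at rate $1/k$ on $T$ with $\lambda_k=1/k^2$) is in fact cleaner than the paper's argument, which selects its step $\lambda'_k$ \emph{after} looking at $y^k$ and so handles the quantifier order in the Clarke definition less transparently.
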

\begin{proof}Obviously, $\textup{span}\left\{~\mathrm{e}_i,~~ i\in\Gamma~\right\}=\{~d\in\mathbb{R}^{N}~|~\textup{supp}(d)\subseteq\Gamma~\}$.

We first prove $T^C_{S}(\overline{x})\subseteq\{~d\in\mathbb{R}^{N}~|~\textup{supp}(d)\subseteq\Gamma~\}$. For $\forall~d\in T^C_{S}(\overline{x})$, we have $\forall~\{x^k\}\subset S,~\forall~\{\lambda_k\}\subset\mathbb{R}_+ $ with $\lim_{k\rightarrow\infty}x^k=\overline{x},~\lim_{k\rightarrow\infty}\lambda_k=0,$ there is a sequence $\{y^k\}$ such~that $\lim_{k\rightarrow\infty}y^k=d$ and $x^k+\lambda_{k}y^k\in S,~ k=1,2,\cdots$. Assume that $\textup{supp}(d)\nsubseteq\Gamma$, namely there is an $i_0\in\textup{supp}(d)$ but $i_0\notin\Gamma$. Since $\lim_{k\rightarrow\infty}y^k=d$, it must have $y^{k}_{i_0}\rightarrow d_{i_0}$ which requires $y^{k}_{i_0}\neq 0$ when $k\geq k_0$.
By the arbitrariness of $\{x^k\}$, we take $\{x^k\}\subset S$ such that $\lim_{k\rightarrow\infty}x^k=\overline{x}$, $\textup{supp}(x^k)=\Gamma\cup\Gamma_k$
 with $|\Gamma\cup\Gamma_k|=s$, where $\Gamma_k\subset\{1,2,\cdots,N\}\backslash\Gamma$, and $i_0\notin \Gamma_k$.  Because $\{y^k\}$ is fixed and the arbitrariness of $\{\lambda_k\}$, we can take $\{\lambda_k\}$ which satisfies $\lambda_k<1$ and $\lambda_k/(\min_{i\in\Gamma\cup\Gamma_k,y^k_i\neq 0}|y_i^k|)\rightarrow0$. Now we let
$$\lambda'_k=\min_{i\in\Gamma\cup\Gamma_k,y^k_i\neq 0}\lambda_k\left|\frac{x^k_i}{y^k_i}\right|.$$
Then $\lambda'_k(\neq 0)\rightarrow0$ as $k\rightarrow\infty$. And thus $\forall i\in\Gamma\cup\Gamma_k$, either $|x^k_i+\lambda'_ky_i^k|=|x^k_i|>0$  due to $y^k_i=0$ or
\begin{eqnarray}|x^k_i+\lambda'_ky_i^k|\geq|x^k_i|-\lambda'_k|y_i^k|&=&|x^k_i|-|y_i^k|\min_{i\in\Gamma\cup\Gamma_k,y^k_i\neq 0}\lambda_k\left|\frac{x^k_i}{y^k_i}\right|\geq\left(1-\lambda_k\right)|x^k_i|>0.\nonumber\end{eqnarray}
Moreover, from $i_0\notin\Gamma\cup\Gamma_k$ deriving $x^{k}_{i_0}=0,y^{k}_{i_0}\neq0$, we must have $\|x^k+\lambda'_ky^k\|_0\geq s+1$ for $k\geq k_0$, which is contradicted to $x^k+\lambda'_ky^k\in S$ for any $k=1,2,\cdots$. Therefore $\textup{supp}(d)\subseteq\Gamma$.

 Next we prove $T^C_{S}(\overline{x})\supseteq\{~d\in\mathbb{R}^{N}~|~\textup{supp}(d)\subseteq\Gamma~\}$. For $\forall~d\in\mathbb{R}^{N}$ such that $\textup{supp}(d)\subseteq\Gamma$ and  $\forall~\{x^k\}\subset S,~\forall~\{\lambda_k\}\subset\mathbb{R}_+ $ with $\lim_{k\rightarrow\infty}x^k=\overline{x},~\lim_{k\rightarrow\infty}\lambda_k=0,$ we have $\textup{supp}(d)\subseteq\Gamma\subseteq\textup{supp}(x^k)$ for any $k\geq k_0$. Let
 \begin{equation*}\begin{array}{ll}
                    y^{k}=0, & k=1,2,\cdots,k_0, \\
                    y^{k}=x^{k}-\overline{x}+d, & k=k_0+1,k_0+2,\cdots,
                  \end{array}\end{equation*}
which brings out $x^k+\lambda_ky^k\in S$ for $k=1,2,\cdots$ due to $x^k\in S$. In addition, $\lim_{k\rightarrow\infty}y^k=\lim_{t\rightarrow\infty}x^k-\overline{x}+d=d$. Hence $d\in T^C_{S}(\overline{x})$.

Finally (\ref{CNormal-Cone-S}) holding is obvious. Then the whole proof is completed. \qed\end{proof}
\begin{remark}\label{remark-tan-nor-SN}Clearly for any $\overline{x}\in S$, Bouligand tangent cone $T^B_{S}(\overline{x})$ is closed but non-convex, while Clarke tangent cone $T^C_{S}(\overline{x})$ is closed and convex. In addition, $T^C_{S}(\overline{x})\subseteq T^B_{S}(\overline{x})$.
\end{remark}

\subsection{$\alpha$-Stability, $N$-Stability and $T$-Stability}

When $f(x)$ is continuously differentiable on $\mathbb{R}^N$, we give the definition of three kinds of stability.
\begin{definition}
For real number $\alpha>0$, a vector $x^*\in S$ is called an $\alpha$-stationary point, $N^\sharp$-stationary point and $T^\sharp$-stationary point of $(\ref{f-l-0})$ if it respectively satisfies the relation
\begin{eqnarray}\label{alpha-s-pB}
\alpha-\textup{stationary~point:}~~~~~~~x^*&\in& P_{S}\left(x^*-\alpha \nabla f(x^*)\right),\\
\label{N-s-pB}N^\sharp-\textup{stationary~point:}~~~~~~~~0~~&\in& \nabla f(x^*)+N^\sharp_{S}(x^*),\\
\label{T-s-pB}T^\sharp-\textup{stationary~point:}~~~~~~~~0~~&=&\|\nabla^{\sharp}_S f(x^*)\|,\end{eqnarray}
where $\nabla^{\sharp}_S f(x^*)=\arg\min\{~\|x+\nabla f(x^*)\|~|~x\in T^\sharp_S(x^*)~\}$,
$\sharp\in\{B,C\}$ stands for the sense of Bouligand tangent cone or Clarke tangent cone.
\end{definition}

\begin{theorem}\label{Btheorem-alpha-N-T}Under the concept of Bouligand tangent cone, we consider model $(\ref{f-l-0})$. For $\alpha>0$,\\
if the vector $x^*\in S$ satisfies $\|x^*\|_0=s$, then
$$ \alpha-\text{stationary~point}~~~\Longrightarrow~~~N^B-\text{stationary point}~~~\Longleftrightarrow~~~T^B-\text{stationary point} ;$$
if the vector $x^*\in S$ satisfies $\|x^*\|_0<s$, then
 $$\alpha-\text{stationary~point}~~\Longleftrightarrow~~N^B-\text{stationary~point}~~\Longleftrightarrow~~T^B-\text{stationary~point}~~\Longleftrightarrow~~\nabla f(x^*)=0.$$
\end{theorem}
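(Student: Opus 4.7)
The plan is to split the argument according to whether $|\Gamma|=s$ or $|\Gamma|<s$, where $\Gamma=\textup{supp}(x^*)$, and within each case to exploit the explicit descriptions of $T^B_S(x^*)$ and $N^B_S(x^*)$ delivered by Theorem \ref{theorem-Btan-Bnor}.

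First I would dispatch the equivalence $N^B$-stationary $\Longleftrightarrow$ $T^B$-stationary in full generality. By construction $\nabla^B_S f(x^*)$ is the nearest point in $T^B_S(x^*)$ to $-\nabla f(x^*)$, so $\|\nabla^B_S f(x^*)\|=0$ means $0$ is that nearest point, i.e.\ $\|\nabla f(x^*)\|^2\le \|x+\nabla f(x^*)\|^2$ for every $x\in T^B_S(x^*)$. Because $T^B_S(x^*)$ is a cone (even though Remark \ref{remark-tan-nor-SN} warns it is non-convex), I may substitute $\lambda x$ for $x$ with $\lambda>0$ and divide by $\lambda$; letting $\lambda\downarrow 0$ forces $\langle\nabla f(x^*),x\rangle\ge 0$ for every $x\in T^B_S(x^*)$, which is exactly $-\nabla f(x^*)\in N^B_S(x^*)$ by the definition in (\ref{Tangent-Normal-Cone}). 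The converse is an immediate expansion of $\|x+\nabla f(x^*)\|^2$, so $T^B$-stationarity and $N^B$-stationarity coincide in both sub-cases.

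For the case $|\Gamma|=s$, I would next show $\alpha$-stationary $\Longrightarrow$ $N^B$-stationary by unpacking the hard-thresholding projection $P_S$. Writing $z:=x^*-\alpha\nabla f(x^*)$, membership $x^*\in P_S(z)$ means that $\Gamma$ is an admissible set of $s$ largest-magnitude indices of $z$ and that $x^*_i=z_i$ on $\Gamma$, so $[\nabla f(x^*)]_i=0$ for every $i\in\Gamma$. Hence $\nabla f(x^*)\in\textup{span}\{e_i:i\notin\Gamma\}$, which by (\ref{BNormal-Cone-S}) equals $N^B_S(x^*)$; since this normal cone is a linear subspace, this is exactly $0\in\nabla f(x^*)+N^B_S(x^*)$.

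For the case $|\Gamma|<s$, Theorem \ref{theorem-Btan-Bnor} already collapses $N^B_S(x^*)=\{0\}$, so $N^B$-stationary reduces to $\nabla f(x^*)=0$; by the equivalence of the previous paragraph, so does $T^B$-stationary. To close the loop I would verify that $\alpha$-stationary also reduces to $\nabla f(x^*)=0$: if $\|z\|_0>s$ then every element of $P_S(z)$ has support of size exactly $s$, contradicting $\|x^*\|_0<s$; therefore $\|z\|_0\le s$, which forces $P_S(z)=\{z\}$ and hence $x^*=z$, i.e.\ $\nabla f(x^*)=0$. The reverse direction is immediate because $x^*-\alpha\nabla f(x^*)=x^*$ and $x^*\in P_S(x^*)$.

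The conceptually trickiest point is the $T^B$-$N^B$ equivalence, because the standard polarity identity $P_K(y)=0\Leftrightarrow y\in K^\circ$ is usually stated for convex cones, and Remark \ref{remark-tan-nor-SN} emphasises that $T^B_S(x^*)$ is not convex. Resolving this by the manual scaling argument above, which only uses the fact that $T^B_S(x^*)$ is a cone, is the main obstacle. A secondary subtlety is that $P_S$ is in general multi-valued; the argument in the case $|\Gamma|<s$ relies on the clean observation that $P_S$ becomes single-valued whenever its argument already has $\ell_0$-norm at most $s$.
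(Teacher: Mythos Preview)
Your argument is correct, and it takes a genuinely different route from the paper's own proof.

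The paper proceeds entirely case by case: it first invokes Lemma~2.2 of \cite{Beck13} to obtain the componentwise characterisation \eqref{alpha-eqaul} of $\alpha$-stationarity, and then, separately in the two cases $\|x^*\|_0=s$ and $\|x^*\|_0<s$, it computes $\nabla^B_S f(x^*)$ explicitly (using that $T^B_S(x^*)$ collapses to the single subspace $\{d:\textup{supp}(d)\subseteq\Gamma\}$ when $|\Gamma|=s$, and by a coordinate-wise test vector argument when $|\Gamma|<s$) and compares these formulas with the explicit description \eqref{BNormal-Cone-S} of $N^B_S(x^*)$. Your proof, by contrast, dispatches $N^B\Longleftrightarrow T^B$ once and for all by the scaling argument $x\mapsto\lambda x$, which uses only that $T^B_S(x^*)$ is a cone; and for the $\alpha$-stationary part you unpack $P_S$ directly rather than citing \cite{Beck13}, in the $|\Gamma|<s$ case via the clean support-count observation that any element of $P_S(z)$ has exactly $s$ nonzeros whenever $\|z\|_0>s$.

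What each approach buys: yours is shorter, self-contained, and makes transparent that the $N^B\Leftrightarrow T^B$ equivalence is nothing more than the general polarity fact $0\in\arg\min_{x\in K}\|x-y\|\Leftrightarrow y\in K^\circ$ for a (not necessarily convex) cone $K$. The paper's approach, while more laborious, yields as a byproduct the explicit componentwise descriptions summarised in Table~\ref{relationship-B}, which are then reused downstream (Theorems~\ref{Ctheorem-alpha-N-T}, \ref{B-Necessary-10}, \ref{B-Sufficient-10} and Section~\ref{Section3}). If you adopt your argument in place of the paper's, you may still want to record those componentwise formulas separately, since later results rely on them.
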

\begin{proof}Denote $\Gamma=\textup{supp}(x^*)$. If $x^*$ is an $\alpha$-stationary point of model $(\ref{f-l-0})$, then from Lemma 2.2 in \cite{Beck13}, it holds
\begin{eqnarray}\label{alpha-eqaul}
x^*\in P_{S}\left(x^*-\alpha \nabla f(x^*)\right)~\Longleftrightarrow~\left|(\nabla f(x^*))_i\right|~\left\{\begin{array}{ll}
                      =0, &  i\in \Gamma\\
                      \leq \frac{1}{\alpha} M_s(|x^*|), & i\notin \Gamma,
                    \end{array}
\right.
\end{eqnarray}
for any $\alpha>0$, where $M_s(|x^*|)$ is the $s$th largest element of $|x^*|$.

Case 1. First we consider the case $\|x^*\|_0= s$. Under such circumstance, if $x^*$ is an $N^B$-stationary point of model $(\ref{f-l-0})$,
then by (\ref{BNormal-Cone-S}) in Theorem \ref{theorem-Btan-Bnor}, we have
\begin{eqnarray}\label{BN-eqaul}
-\nabla f(x^*)\in N^B_S(x^*)~\Longleftrightarrow~(\nabla f(x^*))_i~\left\{\begin{array}{ll}
                      =0, &  i\in \Gamma\\
                      \in \mathbb{R}, & i\notin \Gamma,
                    \end{array}
\right.
\end{eqnarray}

Moreover, $\|x^*\|_0= s$  produces
\begin{eqnarray}\nabla^{B}_S f(x^*)&=&\textup{argmin}\{~\|d+\nabla f(x^*)\|~|~d\in T^B_S(x^*)~\}\nonumber\\
&=&\textup{argmin}\{~\|d+\nabla f(x^*)\|~|~\|d\|_0\leq s,~\|x^*+\mu d\|_0\leq s~,\forall~\mu\in\mathbb{R}~\}\nonumber\\
\label{BT-T}&=&\textup{argmin}\{~\|d+\nabla f(x^*)\|~|~\textup{supp}(d)\subseteq\Gamma~\},\end{eqnarray}
where the third equality holds due to $\|x^*\|_0= s$. (\ref{BT-T}) is equivalent to
\begin{eqnarray*}\label{BT-T0}(\nabla^{B}_S f(x^*))_i=\left\{\begin{array}{ll}
                                          -(\nabla f(x^*))_i,& i\in \Gamma, \\
                                          0, & i\notin \Gamma.
                                        \end{array}
\right. \end{eqnarray*}
Therefore, if $x^*$ is an $T^B$-stationary point of model $(\ref{f-l-0})$, then from above
\begin{eqnarray}\label{BT-eqaul}
\nabla^{B}_{S} f(x^*)=0~\Longleftrightarrow~(\nabla f(x^*))_i~\left\{\begin{array}{ll}
                      =0, &  i\in \Gamma\\
                      \in \mathbb{R}, & i\notin \Gamma,
                    \end{array}
\right.
\end{eqnarray}
Henceforth, from (\ref{alpha-eqaul}), (\ref{BN-eqaul}) and (\ref{BT-eqaul}), one can easily check that when $\|x^*\|_0= s$
$$ \alpha-\text{stationary~point}~~~\Longrightarrow~~~N^B-\text{stationary point}~~~\Longleftrightarrow~~~T^B-\text{stationary point}.$$

Case 2. Now we consider the case $\|x^*\|_0<s$. Under such circumstance, $M_s(|x^*|)=0$, and thus if $x^*$ is an $\alpha$-stationary point of model $(\ref{f-l-0})$, then from (\ref{alpha-eqaul}), it holds
\begin{eqnarray}\label{alpha-eqaul-1}
x^*\in P_{S}\left(x^*-\alpha \nabla f(x^*)\right)~\Longleftrightarrow~\nabla f(x^*)=0.\end{eqnarray}

Then  when $\|x^*\|_0<s$, $N^B_{S}(x^*)=\{0\}$ from (\ref{BNormal-Cone-S}), which implies $\nabla f(x^*)=0$. Therefore, if $x^*$ is an $N^B$-stationary point of model $(\ref{f-l-0})$, then
\begin{eqnarray}\label{BN-eqaul-1}
0\in\nabla f(x^*)+N^B_{S}(x^*)~\Longleftrightarrow~\nabla f(x^*)=0.\end{eqnarray}

Finally, we prove $\nabla f(x^*)=0\Longleftrightarrow\nabla^{B}_S f(x^*)=0$ when $\|x^*\|_0<s$. On one hand, if $\nabla f(x^*)=0$, then
\begin{eqnarray}\nabla^{B}_S f(x^*)&=&\textup{argmin}\{~\|d+\nabla f(x^*)\|~|~\|d\|_0\leq s,~\|x^*+\mu d\|_0\leq s~,\forall~\mu\in\mathbb{R}~\}\nonumber\\
&=&\textup{argmin}\{~\|d\|~|~\|d\|_0\leq s,~\|x^*+\mu d\|_0\leq s~,\forall~\mu\in\mathbb{R}~\}=0.\nonumber\end{eqnarray}
On the other hand, if $\nabla^{B}_S f(x^*)=0$, then
\begin{eqnarray}0=\nabla^{B}_S f(x^*)=\textup{argmin}\{~\|d+\nabla f(x^*)\|~|~\|d\|_0\leq s,~\|x^*+\mu d\|_0\leq s~,\forall~\mu\in\mathbb{R}~\}\nonumber\end{eqnarray}
leads to $\|\nabla f(x^*)\|\leq\|d+\nabla f(x^*)\|$ for any $\|d\|_0\leq s,~\|x^*+\mu d\|_0\leq s~,\forall~\mu\in\mathbb{R}$. Particular, for $\forall~i_0\in\{1,2,\cdots,N\}$, we take $d$ with $\textup{supp}(d)=\{i_0\}$. Apparently, $\|x^*+\mu d\|_0\leq s~,\forall~\mu\in\mathbb{R}$ owing to $\|x^*\|_0<s$. Then by valuing $d_{i_0}=-(\nabla f(x^*))_{i_0}$ and $d_{i}=0,~i\neq i_0$, we immediately get $(\nabla f(x^*))_{i_0}=0$ because of $\|\nabla f(x^*)\|\leq\|\nabla f(x^*)-(\nabla f(x^*))_{i_0}\|$. Then by the arbitrariness of $i_0$, it holds $\nabla f(x^*)=0$.
Therefore, if $x^*$ is an $T^B$-stationary point of model $(\ref{f-l-0})$, then
\begin{eqnarray}\label{BT-eqaul-1}
\nabla^{B}_S f(x^*)=0~\Longleftrightarrow~\nabla f(x^*)=0.\end{eqnarray}
Henceforth, from (\ref{alpha-eqaul-1}), (\ref{BN-eqaul-1}) and (\ref{BT-eqaul-1}), one can easily check that when $\|x^*\|_0<s$
$$ \alpha-\text{stationary~point}~~\Longleftrightarrow~~N^B-\text{stationary point}~~\Longleftrightarrow~~T^B-\text{stationary point}~~\Longleftrightarrow~\nabla f(x^*)=0.$$
Overall, the whole proof is finished.\qed\end{proof}

Based on the proof of Theorem \ref{Btheorem-alpha-N-T}, we use the following table to illustrate the relationship among these three stationary points under the concept of Bouligand tangent cone.

 \tabcaption{The relationship among these three kinds ot stationary points.\label{relationship-B}}
{\renewcommand\baselinestretch{1.5}\selectfont
{\centering\begin{tabular}{rc|l|c}\hline
&&~~~~~~~~~~~~~~~~~~~~~~~~$\|x^*\|_0=s$&$\|x^*\|_0<s$\\\hline
$\alpha$ -- stationary~point&\multirow{2}{*}{$\Longleftrightarrow$}&\multirow{2}{*}{$\left|(\nabla f(x^*))_i\right|~\left\{\begin{array}{ll}
                      =0, &  i\in \Gamma\\
                      \leq \frac{1}{\alpha} M_s(|x^*|), & i\notin \Gamma,
                    \end{array}
\right.$}&\multirow{2}{*}{$\nabla f(x^*)=0$}\\
$x^*\in P_{S}\left(x^*-\alpha \nabla f(x^*)\right)$&&&\\\hline
$N^B$ -- stationary~point&\multirow{2}{*}{$\Longleftrightarrow$}&\multirow{2}{*}{$~~(\nabla f(x^*))_i~~~\left\{\begin{array}{ll}
                      =0, &  i\in \Gamma\\
                      \in \mathbb{R}, & i\notin \Gamma,
                    \end{array}
\right.$}&\multirow{2}{*}{$\nabla f(x^*)=0$}\\
$-\nabla f(x^*)\in N^B_S(x^*)$&&&\\\hline
$T^B$ -- stationary~point&\multirow{2}{*}{$\Longleftrightarrow$}&\multirow{2}{*}{$~~(\nabla f(x^*))_i~~~\left\{\begin{array}{ll}
                      =0, &  i\in \Gamma\\
                      \in \mathbb{R}, & i\notin \Gamma,
                    \end{array}
\right.$}&\multirow{2}{*}{$\nabla f(x^*)=0$}\\
$\nabla^{B}_{S} f(x^*)=0$&&&\\\hline
\end{tabular}\par}}
\vspace{6mm}

\begin{theorem}\label{Ctheorem-alpha-N-T}Under the concept of Clarke tangent cone, we consider model $(\ref{f-l-0})$. For $\alpha>0$, if $x^*\in S$ then
$$ \alpha-\text{stationary~point}~~~\Longrightarrow~~~N^C-\text{stationary point}~~~\Longleftrightarrow~~~T^C-\text{stationary point}.$$
\end{theorem}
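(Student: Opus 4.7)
The proof will be considerably shorter than the one for Theorem \ref{Btheorem-alpha-N-T}, because the Clarke cones do not split into the two cases $\|x^*\|_0=s$ and $\|x^*\|_0<s$; the description given in Theorem \ref{lemma-Ctan-Cnor} is uniform in $|\Gamma|$. The plan is to reduce each of the three stationarity conditions to an explicit coordinate-wise statement on $\nabla f(x^*)$ and then compare them.

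First I would set $\Gamma=\textup{supp}(x^*)$ and invoke Theorem \ref{lemma-Ctan-Cnor}, which gives $T^C_S(x^*)=\textup{span}\{\mathrm{e}_i:i\in\Gamma\}$ and $N^C_S(x^*)=\textup{span}\{\mathrm{e}_i:i\notin\Gamma\}$. From the definition of $N^C$-stationarity, $-\nabla f(x^*)\in N^C_S(x^*)$ translates directly into
\begin{eqnarray*}
(\nabla f(x^*))_i=0,\quad i\in\Gamma,\qquad (\nabla f(x^*))_i\in\mathbb{R},\quad i\notin\Gamma.
\end{eqnarray*}
For $T^C$-stationarity, I would compute $\nabla^C_S f(x^*)=\arg\min\{\|d+\nabla f(x^*)\|:\textup{supp}(d)\subseteq\Gamma\}$, which is the orthogonal projection of $-\nabla f(x^*)$ onto the subspace $\textup{span}\{\mathrm{e}_i:i\in\Gamma\}$. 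Coordinate by coordinate this gives $(\nabla^C_Sf(x^*))_i=-(\nabla f(x^*))_i$ for $i\in\Gamma$ and $0$ otherwise, so $\nabla^C_S f(x^*)=0$ is equivalent to exactly the same coordinate condition displayed above. This yields the equivalence $N^C$-stationary $\Leftrightarrow T^C$-stationary.

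For the implication $\alpha$-stationary $\Rightarrow N^C$-stationary, I would reuse the characterization of $\alpha$-stationarity already recalled in equation (\ref{alpha-eqaul}) (Beck--Eldar--Hallak Lemma 2.2): for any $\overline{x}\in S$ and any $\alpha>0$, $x^*\in P_S(x^*-\alpha\nabla f(x^*))$ forces $(\nabla f(x^*))_i=0$ for every $i\in\Gamma$ (regardless of whether $|\Gamma|=s$ or $|\Gamma|<s$, since in the latter case $M_s(|x^*|)=0$ even forces vanishing off $\Gamma$). In either case the coordinates in $\Gamma$ vanish, which is exactly the $N^C$-stationarity condition derived above. The converse direction fails in general because $N^C$-stationarity places no constraint on $(\nabla f(x^*))_i$ for $i\notin\Gamma$, whereas $\alpha$-stationarity requires the bound $|(\nabla f(x^*))_i|\leq (1/\alpha)M_s(|x^*|)$; I would note this asymmetry to justify the one-way arrow.

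The argument is essentially bookkeeping once Theorem \ref{lemma-Ctan-Cnor} and the Beck--Eldar--Hallak characterization are in hand, so there is no real obstacle. If anything, the only delicate point worth stating carefully is the projection formula that gives $\nabla^C_Sf(x^*)$, which relies on the fact that $T^C_S(x^*)$ is a linear subspace (a consequence of the Clarke tangent cone being convex, noted in Remark \ref{remark-tan-nor-SN}); this is what keeps the equivalence with $N^C$-stationarity clean and avoids the case split that was necessary for the Bouligand version.
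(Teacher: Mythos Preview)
Your proposal is correct and follows essentially the same approach as the paper: reduce each of the three stationarity notions to a coordinate-wise condition on $\nabla f(x^*)$ via Theorem \ref{lemma-Ctan-Cnor} and the characterization (\ref{alpha-eqaul}), then compare. The paper's proof is organized identically, and your additional remark explaining why the reverse implication $N^C\Rightarrow\alpha$ fails is a helpful clarification not spelled out in the original.
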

\begin{proof}
Denote $\Gamma=\textup{supp}(x^*)$. If $x^*$ is an $\alpha$-stationary point of model $(\ref{f-l-0})$, for any $\alpha>0$, we have (\ref{alpha-eqaul})

If $x^*$ is an $N^C$-stationary point of model $(\ref{f-l-0})$, then by (\ref{CNormal-Cone-S}), we have
\begin{eqnarray}\label{CN-eqaul}
-\nabla f(x^*)\in N^C_S(x^*)~\Longleftrightarrow~(\nabla f(x^*))_i~\left\{\begin{array}{ll}
                      =0, &  i\in \Gamma\\
                      \in \mathbb{R}, & i\notin \Gamma,
                    \end{array}
\right.
\end{eqnarray}

Moreover, by (\ref{CTangent-Cone-S}), it follows
\begin{eqnarray*}\nabla^{C}_S f(x^*)&=&\textup{argmin}\{~\|d+\nabla f(x^*)\|~|~d\in T^C_S(x^*)~\}=\textup{argmin}\{~\|d+\nabla f(x^*)\|~|~\textup{supp}(d)\subseteq\Gamma~\},\end{eqnarray*}
which is equivalent to
\begin{eqnarray*}\label{CT-T0}(\nabla^{C}_S f(x^*))_i=\left\{\begin{array}{ll}
                                          -(\nabla f(x^*))_i,& i\in \Gamma, \\
                                          0, & i\notin \Gamma.
                                        \end{array}
\right. \end{eqnarray*}
Therefore, if $x^*$ is an $T^C$-stationary point of model $(\ref{f-l-0})$, then from above
\begin{eqnarray}\label{CT-eqaul}
\nabla^{C}_{S} f(x^*)=0~\Longleftrightarrow~(\nabla f(x^*))_i~\left\{\begin{array}{ll}
                      =0, &  i\in \Gamma\\
                      \in \mathbb{R}, & i\notin \Gamma,
                    \end{array}
\right.
\end{eqnarray}
Henceforth, from (\ref{alpha-eqaul}), (\ref{CN-eqaul}) and (\ref{CT-eqaul}), one can easily check
$$ \alpha-\text{stationary~point}~~~\Longrightarrow~~~N^C-\text{stationary point}~~~\Longleftrightarrow~~~T^C-\text{stationary point}.$$
Overall, the whole proof is finished.\qed\end{proof}

Based on the proof of Theorem \ref{Ctheorem-alpha-N-T}, we use the following table to illustrate the relationship among these three stationary points under the concept of Clarke tangent cone.

 \tabcaption{The relationship among these three kinds of stationary points.\label{relationship-C}}
{\renewcommand\baselinestretch{1.5}\selectfont
{\centering\begin{tabular}{rc|l|c}\hline
&&~~~~~~~~~~~~~~~~~~~~~~~~$\|x^*\|_0=s$&$\|x^*\|_0<s$\\\hline
$\alpha$ -- stationary~point&\multirow{2}{*}{$\Longleftrightarrow$}&\multirow{2}{*}{$\left|(\nabla f(x^*))_i\right|~\left\{\begin{array}{ll}
                      =0, &  i\in \Gamma\\
                      \leq \frac{1}{\alpha} M_s(|x^*|), & i\notin \Gamma,
                    \end{array}
\right.$}&\multirow{2}{*}{$\nabla f(x^*)=0$}\\
$x^*\in P_{S}\left(x^*-\alpha \nabla f(x^*)\right)$&&&\\\hline
$N^C$ -- stationary~point&\multirow{2}{*}{$\Longleftrightarrow$}&\multirow{2}{*}{$~~(\nabla f(x^*))_i~~~\left\{\begin{array}{ll}
                      =0, &  i\in \Gamma\\
                      \in \mathbb{R}, & i\notin \Gamma,
                    \end{array}
\right.$}&\multirow{2}{*}{$(\nabla f(x^*))_i~\left\{\begin{array}{ll}
                      =0, &  i\in \Gamma\\
                      \in \mathbb{R}, & i\notin \Gamma,
                    \end{array}
\right.$}\\
$-\nabla f(x^*)\in N^C_S(x^*)$&&&\\\hline
$T^C$ -- stationary~point&\multirow{2}{*}{$\Longleftrightarrow$}&\multirow{2}{*}{$~~(\nabla f(x^*))_i~~~\left\{\begin{array}{ll}
                      =0, &  i\in \Gamma\\
                      \in \mathbb{R}, & i\notin \Gamma,
                    \end{array}
\right.$}&\multirow{2}{*}{$(\nabla f(x^*))_i~\left\{\begin{array}{ll}
                      =0, &  i\in \Gamma\\
                      \in \mathbb{R}, & i\notin \Gamma,
                    \end{array}
\right.$}\\
$\nabla^{C}_{S} f(x^*)=0$&&&\\\hline
\end{tabular}\par}}
\vspace{6mm}
\subsection{Second Order Optimality Conditions}
In this subsection, we study the second order necessary and sufficient optimality of model $(\ref{f-l-0})$ if $f(x)$ is twice continuously differentiable on $\mathbb{R}^N$ and satisfies the following assumption.
\begin{assumption}\label{ass0}
The gradient of the objective function $f(x)$ is Lipschitz with constant $L_f$ over $\mathbb{R}^N$:
\begin{eqnarray}\label{lip}\|\nabla f(x)-\nabla f(y)\|\leq L_f\|x-y\|,~~~\forall~x, y\in\mathbb{R}^N.\end{eqnarray}
\end{assumption}

\begin{theorem}[Second Order Necessary Optimality]\label{B-Necessary-10}If $x^{*}\in S$ is the optimal solution of $(\ref{f-l-0})$ , then for $0<\alpha<\frac{1}{L_f}$, $x^{*}$ is also the $\alpha$-stationary point, moreover,
\begin{eqnarray}\label{B-Necessary-1}
d^{\top}\nabla^{2}f(x^*)d\geq 0,~~~~\forall~d\in T^C_S(x^*).\end{eqnarray}
where $\nabla^{2}f(x^*)$ is the Hessian matrix of $f$ at $x^*$.
\end{theorem}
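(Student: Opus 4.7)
The theorem splits into two claims: $\alpha$-stationarity and positive semidefiniteness of the Hessian on the Clarke tangent cone. I would tackle them separately, since the first is a $C^1$ argument exploiting the Lipschitz gradient, while the second is a classical second-order Taylor argument that becomes essentially trivial once the structure of $T^C_S(x^*)$ from Theorem~\ref{lemma-Ctan-Cnor} is invoked.

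For the $\alpha$-stationarity part, my plan is to compare $f$ against its quadratic majorant
\[
g(y)\;=\;f(x^*)+\nabla f(x^*)^\top(y-x^*)+\frac{1}{2\alpha}\|y-x^*\|^2.
\]
The key observation is that minimizing $g$ over $y\in S$ coincides, up to an additive constant, with minimizing $\|y-(x^*-\alpha\nabla f(x^*))\|^2$, hence is exactly the projection problem defining $P_S(x^*-\alpha\nabla f(x^*))$. Pick any $\bar x\in P_S(x^*-\alpha\nabla f(x^*))$; since $x^*\in S$ is feasible in this minimization, $g(\bar x)\leq g(x^*)=f(x^*)$. The descent lemma implied by Assumption~\ref{ass0} then gives
\[
f(\bar x)\;\leq\; f(x^*)+\nabla f(x^*)^\top(\bar x-x^*)+\tfrac{L_f}{2}\|\bar x-x^*\|^2\;=\;g(\bar x)-\Bigl(\tfrac{1}{2\alpha}-\tfrac{L_f}{2}\Bigr)\|\bar x-x^*\|^2.
\]
Since $\alpha<1/L_f$, the bracketed constant is strictly positive, so $f(\bar x)\leq f(x^*)-c\|\bar x-x^*\|^2$ with $c>0$. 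Combining with global optimality $f(\bar x)\geq f(x^*)$ forces $\bar x=x^*$, whence $x^*\in P_S(x^*-\alpha\nabla f(x^*))$.

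For the second-order condition, I would exploit the description $T^C_S(x^*)=\textup{span}\{\mathrm{e}_i:i\in\Gamma\}$ from Theorem~\ref{lemma-Ctan-Cnor}. For any $d$ in this subspace and any $t\in\mathbb{R}$, $\textup{supp}(x^*+td)\subseteq\Gamma$ with $|\Gamma|\leq s$, so $x^*+td\in S$ throughout. Thus $\varphi(t):=f(x^*+td)$ attains its minimum over all of $\mathbb{R}$ at $t=0$, which is an unconstrained scalar problem. Equivalently, one expands $f(x^*\pm td)-f(x^*)\geq 0$ to second order; adding the two inequalities cancels the first-order term, and dividing by $t^2$ and letting $t\to 0$ yields $d^\top\nabla^2 f(x^*)d\geq 0$, as required.

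The only delicate point I anticipate is the strictness of $\alpha<1/L_f$: were equality $\alpha=1/L_f$ permitted, the descent-lemma comparison with $g$ would degenerate and one could not conclude $\bar x=x^*$ from that single inequality without additional information about $P_S$. Everything else is routine because Theorem~\ref{lemma-Ctan-Cnor} reduces the Clarke-tangent analysis to a linear subspace along which $S$ is locally flat, so the second-order optimality reduces to its classical unconstrained form.
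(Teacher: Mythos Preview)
Your argument is correct and follows essentially the same plan as the paper, with two minor variations worth noting. For $\alpha$-stationarity the paper simply cites Theorem~2.2 of \cite{Beck13}, whereas you reproduce that argument in full via the descent lemma and the projection interpretation of the quadratic majorant $g$; your version is more self-contained and in fact yields the slightly stronger conclusion that $P_S(x^*-\alpha\nabla f(x^*))=\{x^*\}$. For the Hessian condition the paper first uses $\alpha$-stationarity (via (\ref{alpha-eqaul})) together with (\ref{CTangent-Cone-S}) to deduce $d^\top\nabla f(x^*)=0$ on $T^C_S(x^*)$ and then applies a one-sided Taylor expansion, while you bypass the first-order fact entirely by the symmetric $\pm t$ trick (equivalently, $\varphi''(0)\geq 0$ for $\varphi(t)=f(x^*+td)$). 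Your route thus decouples the two conclusions of the theorem, which is a small conceptual gain; the paper's route extracts the additional information $d^\top\nabla f(x^*)=0$ along the way.
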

\begin{proof}Since $x^{*}$ is the optimal solution of $(\ref{f-l-0})$,
it must be an $\alpha$-stationary point of model $(\ref{f-l-0})$ for $0<\alpha<\frac{1}{L_f}$~(Theorem 2.2 in \cite{Beck13}).
By (\ref{alpha-eqaul}) and (\ref{CTangent-Cone-S}), one can easily verify that
$$d^{\top}\nabla f(x^*)=0,~~~~\forall~d\in T^C_S(x^*).$$
Moreover, for any $\tau>0$ and $d\in T^C_S(x^*)$, by the optimality of $x^*$ and equality above, we have
\begin{eqnarray}\label{semi}
0&\leq& f(x^*+\tau d)-f(x^*)\nonumber\\
&=&f(x^*)+\tau d^{\top}\nabla f(x^*)+\frac{\tau^{2}}{2}d^{\top}\nabla^{2}f(x^*)d+o(\|d\|^2)-f(x^*)\nonumber\\
&=&\frac{\tau^{2}}{2}d^{\top}\nabla^{2}f(x^*)d+o(\|d\|^2),\nonumber\end{eqnarray}
which implies that
$$d^{\top}\nabla^{2}f(x^*)d\geq 0,~~~~\forall~d\in T^C_S(x^*).$$
The desired result is acquired.\qed\end{proof}

\begin{theorem}[Second Order Sufficient Optimality]\label{B-Sufficient-10}If $x^{*}\in S$ is an $\alpha$-stationary point of $(\ref{f-l-0})$ and satisfies
\begin{eqnarray}\label{B-Sufficient-1}
d^{\top}\nabla^{2}f(x^*)d>0,~~~~\forall~d\in T^C_S(x^*),\end{eqnarray}
then $x^{*}$ is the strictly locally optimal solution of $(\ref{f-l-0})$. Moreover, there is a $\gamma>0$ and $\delta>0$, when any $x\in B(x^*, \delta)\cap S$, it holds
\begin{eqnarray}\label{B-Sufficient-2} f(x)\geq f(x^*)+\gamma\|x-x^*\|^{2}.\end{eqnarray}
 \end{theorem}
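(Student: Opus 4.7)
The plan is to combine the first-order information from $\alpha$-stationarity with a second-order Taylor expansion around $x^*$, using the strict positive-definiteness of $\nabla^2 f(x^*)$ on the Clarke tangent cone to extract a quadratic lower bound. First I would localize: set $\rho:=\min_{i\in\Gamma}|x^*_i|>0$ with $\Gamma:=\textup{supp}(x^*)$, and shrink $\delta\le\rho/2$ so that for every $x\in B(x^*,\delta)\cap S$ the inequality $|x_i-x^*_i|<|x^*_i|/2$ forces $x_i\ne 0$ on $\Gamma$, hence $\Gamma\subseteq \textup{supp}(x)$. In the principal case $|\Gamma|=s$, the sparsity bound $\|x\|_0\le s$ then forces $\textup{supp}(x)=\Gamma$ exactly, so that $d:=x-x^*$ satisfies $\textup{supp}(d)\subseteq\Gamma$ and therefore $d\in T^C_S(x^*)$ by (\ref{CTangent-Cone-S}).

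Next I would kill the linear term and extract a uniform quadratic lower bound. The $\alpha$-stationarity characterization (\ref{alpha-eqaul}) gives $(\nabla f(x^*))_i=0$ for every $i\in\Gamma$, so $\langle\nabla f(x^*),d\rangle=0$ whenever $\textup{supp}(d)\subseteq\Gamma$. Since $T^C_S(x^*)=\textup{span}\{\mathrm{e}_i:i\in\Gamma\}$ is a subspace of finite dimension $|\Gamma|$, and the symmetric quadratic form $d\mapsto d^{\top}\nabla^2 f(x^*)d$ is strictly positive on its nonzero elements by (\ref{B-Sufficient-1}), compactness of its unit sphere yields a constant $\mu>0$ with $d^{\top}\nabla^2 f(x^*)d\ge\mu\|d\|^2$ for all $d\in T^C_S(x^*)$. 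Feeding these two facts into the second-order Taylor expansion $f(x)-f(x^*)=\langle\nabla f(x^*),d\rangle+\tfrac12 d^{\top}\nabla^2 f(x^*)d+o(\|d\|^2)$ and further shrinking $\delta$ so that the remainder is bounded in absolute value by $(\mu/4)\|d\|^2$ delivers (\ref{B-Sufficient-2}) with $\gamma=\mu/4$.

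The main obstacle is the degenerate case $\|x^*\|_0<s$. Here, even for $x$ arbitrarily close to $x^*$ in $S$, $\textup{supp}(x)$ may strictly contain $\Gamma$, so $d=x-x^*$ may have components outside $\Gamma$ and therefore $d\notin T^C_S(x^*)$; the strict positivity of $\nabla^2 f(x^*)$ on $T^C_S(x^*)$ controls nothing along those extra coordinates. For instance, $f(x_1,x_2)=-x_1^2+x_2^2$ at $x^*=0$ with $s=2$ satisfies all stated hypotheses (vacuously, since $T^C_S(0)=\{0\}$) yet is not locally minimized, indicating that the statement is really intended under the nondegenerate assumption $|\Gamma|=s$ or with (\ref{B-Sufficient-1}) strengthened to hold on the larger Bouligand cone $T^B_S(x^*)$ in (\ref{BTangent-Cone-S1}). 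Under the latter strengthening, $T^B_S(x^*)$ decomposes into the finite union $\bigcup_{\Upsilon\supseteq\Gamma,\,|\Upsilon|\le s}\textup{span}\{\mathrm{e}_i:i\in\Upsilon\}$, and the argument above can be run on each piece with a uniform $\mu$ obtained as the minimum of the finitely many cone-constants.
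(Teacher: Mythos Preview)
Your argument for the nondegenerate case $|\Gamma|=s$ is essentially the same as the paper's: both rely on the observation that points of $S$ near $x^*$ must have support exactly $\Gamma$, so that $d=x-x^*\in T^C_S(x^*)$, then use the first-order characterization of $\alpha$-stationarity to kill the linear term and the strict positivity of the Hessian on $T^C_S(x^*)$ to control the quadratic term. The paper packages this as a proof by contradiction (extract a violating sequence, normalize, pass to a subsequential limit $\bar d\in T^C_S(x^*)$, derive $\bar d^{\top}\nabla^2 f(x^*)\bar d\le 0$), while you argue directly with a compactness-based uniform lower bound $\mu$; these are equivalent variants of the same idea.

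Your observation about the degenerate case $|\Gamma|<s$ is well taken and in fact exposes a gap in the paper's own proof: the paper simply asserts that a violating sequence can be taken with $\textup{supp}(x^k)=\textup{supp}(x^*)$, which is only automatic when $|\Gamma|=s$. Your counterexample (or the variant $x^*=(1,0,0)$, $s=2$, $f(x)=(x_1-1)^2-x_2^2$, where $\Gamma=\{1\}$, $T^C_S(x^*)=\textup{span}\{\mathrm e_1\}$, the hypothesis holds, $\nabla f(x^*)=0$, yet $x^*$ is not a local minimizer) confirms that the statement as written does not hold in the degenerate case under the natural reading of the hypothesis. Your proposed remedies --- either assume $\|x^*\|_0=s$, or strengthen the second-order condition to the Bouligand cone $T^B_S(x^*)$ and take the minimum of the finitely many subspace constants --- are both sound and either one completes the proof.
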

\begin{proof}We only prove the second conclusion. From Table \ref{relationship-C}, one can easily check
\begin{eqnarray}\label{ort}
d^{\top}\nabla f(x^*)=0,~~\forall~d\in T^C_S(x^*),\end{eqnarray}
By assuming the conclusion does not hold, there must be a sequence $\{x^k\}$ with
$$\lim_{k\rightarrow\infty}x^k=x^*~~~\textup{and}~~~\textup{supp}(x^k)=\textup{supp}(x^*)$$
such that
\begin{eqnarray}\label{B-Sufficient-con} f(x^{k})- f(x^*)\leq\frac{1} {k}\|x^{k}-x^*\|^{2}.\end{eqnarray}
Denote $d^{k}=\frac{x^{k}-x^*}{\|x^{k}-x^*\|}$. Due to $\|\frac{x^{k}-x^*}{\|x^{k}-x^*\|}\|=1$, there exists a convergent subsequence,
without loss of generality, assuming  $d^{k}\rightarrow \bar{d}$.
$d^{k}\in T^C_S(x^*)$ and $\bar{d}\in T^C_S(x^*)$ due to $\textup{supp}(x^k)=\textup{supp}(x^*)$, which means $d^{k\top}\nabla f(x^*)=0$ by (\ref{ort}). From (\ref{B-Sufficient-con}), we have
\begin{eqnarray}\frac{1}{k}&\geq& \frac{1}{\|x^{k}-x^*\|^{2}}\left( f(x^{k})- f(x^*)\right)\nonumber\\
&=&\frac{1}{\|x^{k}-x^*\|^{2}}\left((x^{k}-x^*)^\top\nabla f(x^{k})+\frac{1}{2}(x^{k}-x^*)^\top \nabla^2f(x^*)(x^{k}-x^*)+o(\|x^{k}-x^*\|^{2})\right)\nonumber\\
&=&d^{k\top} \nabla^2f(x^*)d^{k}+ \frac{1}{\|x^{k}-x^*\|}d^{k\top} \nabla f(x^*)+o(1)\nonumber\\
\label{B-Sufficient-con1}&=&d^{k\top} \nabla^2f(x^*)d^{k}+o(1).\end{eqnarray}
Then take the limit of both side of (\ref{B-Sufficient-con1}), we obtain
$$0=\lim_{k\rightarrow\infty}\frac{1}{k}\geq\lim_{k\rightarrow\infty}\left(d^{k\top} \nabla^2f(x^*)d^{k}+o(1)\right)=\bar{d}^{\top} \nabla^2f(x^*)\bar{d}>0, ~~
\bar{d}\in T^C_S(x^*),$$
which is contradicted. Therefore the conclusion does hold.\qed\end{proof}

\section{Optimality Conditions for Model (\ref{p})}\label{Section3}

In this section, we mainly aim at specifying the results in Section \ref{Section2} to the model (\ref{p}). For notational simplicity, we hereafter denote $r(x)\triangleq \frac{1}{2}\|Ax-b\|^2$. First, we define the projection on $S\cap\mathbb{R}_+^{N}$ named \textbf{support projection}, which has an explicit expression.
\begin{pro}\label{prop1}
$\textmd{P}_{S\cap\mathbb{R}_+^{N}}(x)= \textmd{P}_S\cdot\textmd{P}_{\mathbb{R}_+^{N}}(x)$.
\end{pro}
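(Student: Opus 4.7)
The plan is to unfold each projection by first choosing a support and then choosing the values on that support, and observe that both sides reduce to the same two-stage optimization. Both $\textmd{P}_{S\cap\mathbb{R}_+^N}$ and $\textmd{P}_S\circ\textmd{P}_{\mathbb{R}_+^N}$ are set-valued, so I will prove set equality by showing that every element on one side arises on the other, and conversely.

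First I would decouple the support-selection from the value-assignment on the left hand side. Fix any candidate support $\Gamma\subseteq\{1,\dots,N\}$ with $|\Gamma|\leq s$ and consider
\begin{equation*}
\min_{y\in\mathbb{R}^N}\;\|x-y\|^2\quad\text{subject to}\quad y\geq 0,\;\textup{supp}(y)\subseteq\Gamma.
\end{equation*}
Because the objective and the constraints are separable, the optimal values are $y_i=\max(x_i,0)=(\textmd{P}_{\mathbb{R}_+^N}(x))_i$ for $i\in\Gamma$ and $y_i=0$ for $i\notin\Gamma$, with residual
\begin{equation*}
\sum_{i\notin\Gamma}x_i^2+\sum_{i\in\Gamma}\bigl(\min(x_i,0)\bigr)^2
=\|x\|^2-\sum_{i\in\Gamma}\bigl(\max(x_i,0)\bigr)^2.
\end{equation*}
Minimizing this residual over all admissible $\Gamma$ therefore amounts to choosing $\Gamma$ to contain the indices of the $s$ largest values of $\max(x_i,0)$; equivalently, the $s$ largest values of $(\textmd{P}_{\mathbb{R}_+^N}(x))_i$.

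Next I would compute the right hand side. By definition $\textmd{P}_{\mathbb{R}_+^N}(x)$ has components $\max(x_i,0)\geq 0$, and applying $\textmd{P}_S$ to a nonnegative vector keeps the $s$ largest (in magnitude, hence in value) entries and zeros out the rest, using the explicit formula for $\textmd{P}_S$ given in the paper. The resulting set of vectors is exactly the family of vectors supported on a size-at-most-$s$ index set containing the $s$ largest values of $\max(x_i,\!0)$, with those components preserved. This matches the description obtained from the left hand side in the previous step, so the two set-valued mappings coincide.

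The only step that requires care is the handling of ties, since both $\textmd{P}_S$ and the support optimization over $\Gamma$ are multi-valued precisely when several entries of $\max(x_i,0)$ are tied for the $s$-th largest position. The main obstacle, if there is one, is checking that the ambiguity in selecting the top-$s$ indices on the right hand side produces exactly the same collection of supports as the ambiguity in the support optimization on the left; this follows immediately because both selection rules depend only on the ordering of $\max(x_i,0)$, so one set of optimal supports feeds both projections. Once this is noted, equality of the two set-valued maps follows directly.
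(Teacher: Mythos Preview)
Your argument is correct and reaches the same conclusion as the paper, but the route is organized differently. The paper proceeds by a case split on $|I_+(x)|$ (where $I_+(x)=\{i:x_i>0\}$): if $|I_+(x)|\leq s$ the projection is just $\textmd{P}_{\mathbb{R}_+^N}(x)$, and if $|I_+(x)|>s$ a direct pairwise comparison of distances shows one must retain the $s$ largest positive entries. You instead run a uniform two-stage optimization---first solve for the optimal values on a fixed support $\Gamma$, obtaining the residual $\|x\|^2-\sum_{i\in\Gamma}(\max(x_i,0))^2$, and then optimize over $\Gamma$---which subsumes both of the paper's cases in a single formula. Your treatment also makes the set-valued nature of the projections and the handling of ties explicit, which the paper leaves implicit. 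Both approaches rest on the same key observation (the optimal support is determined by the ordering of $\max(x_i,0)$), so the difference is one of packaging rather than substance; your version is a bit more systematic, the paper's a bit more elementary.
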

\begin{proof}
Denote $I_+(x)=\{i|x_i>0\}$,$I_0(x)=\{i|x_i=0\}$,$I_-(x)=\{i|x_i<0\}$,
let $y\in\textmd{P}_{S\cap\mathbb{R}_+^{N}}(x)$.
For $i\in I_0(x)\cup I_-(x)$, it is easy to see $y_i=0$.
There are two cases:\\
Case 1, $|I_+(x)|\leq s$, then $y=\textmd{P}_{\mathbb{R}_+^{N}}(x)=\textmd{P}_S\cdot\textmd{P}_{\mathbb{R}_+^{N}}(x)$.\\
Case 2, $|I_+(x)|> s$, we should choose no more than $s$ coordinates from $I_+(x)$ to minimize $\|x-y\|$.
For $i,j\in I_+(x)$ and $x_i>x_j$, $$(x_i-x_i)^2+(x_j-x_j)^2<(x_i-x_i)^2+(x_j-0)^2<(x_i-0)^2+(x_j-x_j)^2<(x_i-0)^2+(x_j-0)^2.$$
Then the projection on $S\cap\mathbb{R}_+^{N}$ sets all but $s$ largest elements of $\textmd{P}_{\mathbb{R}_+^{N}}(x)$ to zero,
which is $\textmd{P}_S\cdot\textmd{P}_{\mathbb{R}_+^{N}}(x)$.
\qed
\end{proof}

Notice that the order of projections can't be changed. For example
$x=(-2,1)^T$, $s=1$. $\textmd{P}_{S\cap\mathbb{R}_+^{2}}(x)= \textmd{P}_S\cdot\textmd{P}_{\mathbb{R}_+^{2}}(x)=(0,1)^T$,
while $\textmd{P}_{\mathbb{R}_+^{2}}\cdot\textmd{P}_S(x)=(0,0)^T$.

The direct result of Theorem \ref{theorem-Btan-Bnor} and \ref{lemma-Ctan-Cnor} is the following theorem.
\begin{theorem}\label{pro-tan-nor-SN} For any $\overline{x}\in S\cap\mathbb{R}^{N}_+$, by denoting~~$\mathbb{R}^{N}_+(\overline{x}):=\{~x\in\mathbb{R}^{N}~|~x_i\geq0,i\notin\Gamma~\}$, it follows
\begin{eqnarray}
\label{T-N-Cone-SN-T}
&&T^B_{S\cap\mathbb{R}^{N}_+}(\overline{x})=T^B_{S}(\overline{x})\cap\mathbb{R}^{N}_+(\overline{x}),
~~~~N^B_{S\cap\mathbb{R}^{N}_+}(\overline{x})=T^B_{S}(\overline{x})\cap(-\mathbb{R}^{N}_+(\overline{x}))\\
\label{T-N-Cone-SN-N}&&T^C_{S\cap\mathbb{R}^{N}_+}(\overline{x})= T^C_{S}(\overline{x}),~~~~~~~~~~~~~~~~~~~~~N^C_{S\cap\mathbb{R}^{N}_+}(\overline{x})= N^C_{S}(\overline{x}).\end{eqnarray}
\end{theorem}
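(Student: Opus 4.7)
The plan is to establish the four identities by leveraging the characterizations of $T^B_S(\overline{x})$ and $T^C_S(\overline{x})$ already obtained in Theorems~\ref{theorem-Btan-Bnor} and~\ref{lemma-Ctan-Cnor}, together with the fact that near $\overline{x}$ the set $S\cap\mathbb{R}^{N}_+$ has a convenient ``product-like'' structure: for $i\in\Gamma$ the constraint $x_i\geq 0$ is locally inactive since $\overline{x}_i>0$, while for $i\notin\Gamma$ we have $\overline{x}_i=0$ so sparsity and nonnegativity become entangled only through the trailing coordinates. I will split the argument into the two tangent cone identities (the main content) and then derive the two normal cone identities as polars.

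For the Bouligand tangent cone identity $T^B_{S\cap\mathbb{R}^{N}_+}(\overline{x})=T^B_{S}(\overline{x})\cap\mathbb{R}^{N}_+(\overline{x})$ I proceed by double inclusion, mirroring the sequence constructions used in Theorem~\ref{theorem-Btan-Bnor}. For $\subseteq$, any representing sequence $\{x^k\}\subset S\cap\mathbb{R}^{N}_+$ lies in $S$, yielding $d\in T^B_S(\overline{x})$ directly; and for $i\notin\Gamma$ we have $\overline{x}_i=0$, so $\lambda_k(x^k_i-\overline{x}_i)=\lambda_k x^k_i\geq 0$ and the limit forces $d_i\geq 0$, i.e. $d\in\mathbb{R}^{N}_+(\overline{x})$. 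For $\supseteq$, given $d\in T^B_S(\overline{x})\cap\mathbb{R}^{N}_+(\overline{x})$, I copy the constructive argument of Theorem~\ref{theorem-Btan-Bnor} and take $\lambda_k=k$, $x^k=\overline{x}+d/k$ for $k\geq k_0$ large enough: membership in $S$ follows from the characterization $\|\overline{x}+\mu d\|_0\leq s$ for all $\mu$, while nonnegativity is automatic for $i\in\Gamma$ by continuity (since $\overline{x}_i>0$) and for $i\notin\Gamma$ because $\overline{x}_i=0$ and $d_i\geq 0$.

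For the Clarke tangent cone identity $T^C_{S\cap\mathbb{R}^{N}_+}(\overline{x})=T^C_S(\overline{x})$, the $\supseteq$ direction takes $d$ with $\textup{supp}(d)\subseteq\Gamma$ and, for any admissible pair $\{x^k\}\subset S\cap\mathbb{R}^{N}_+$ with $x^k\to\overline{x}$ and $\lambda_k\to 0^+$, defines $y^k=x^k-\overline{x}+d$ exactly as in Theorem~\ref{lemma-Ctan-Cnor}; eventually $\Gamma\subseteq\textup{supp}(x^k)$ and $\|x^k\|_0\leq s$, so $x^k+\lambda_k y^k$ stays in $S$, and because $\textup{supp}(d)\subseteq\Gamma$ only the strictly positive components of $\overline{x}$ are perturbed, so nonnegativity is preserved for small $\lambda_k$. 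The $\subseteq$ direction reuses the contradiction construction of Theorem~\ref{lemma-Ctan-Cnor}: if $\textup{supp}(d)\not\subseteq\Gamma$ at some $i_0$, I choose a \emph{nonnegative} $\{x^k\}\subset S\cap\mathbb{R}^{N}_+$ with $\textup{supp}(x^k)=\Gamma\cup\Gamma_k$ of cardinality $s$ omitting $i_0$; the same cardinality argument forces $\|x^k+\lambda'_k y^k\|_0\geq s+1$ for an appropriate $\lambda'_k$, contradicting $x^k+\lambda'_k y^k\in S\cap\mathbb{R}^{N}_+$. The normal cone identities then follow by polarity: the Clarke case is immediate from $T^C_{S\cap\mathbb{R}^{N}_+}(\overline{x})=T^C_S(\overline{x})$, while the Bouligand case requires a short branch-by-branch polar computation on the decomposition~(\ref{BTangent-Cone-S1}) of $T^B_S(\overline{x})$ intersected with $\mathbb{R}^{N}_+(\overline{x})$, where the polar of each branch makes coordinates in $\Gamma$ free, those in $\Upsilon\setminus\Gamma$ nonpositive, and those outside $\Upsilon$ free, so that intersecting over admissible $\Upsilon\supseteq\Gamma$ with $|\Upsilon|\leq s$ recovers the stated right-hand side.

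The main obstacle I anticipate is the $\subseteq$ inclusion in the Clarke tangent cone identity: Clarke tangent cones are not monotone in the set, so the trivial inclusion $S\cap\mathbb{R}^{N}_+\subseteq S$ does not directly yield $T^C_{S\cap\mathbb{R}^{N}_+}(\overline{x})\subseteq T^C_S(\overline{x})$, and I must instead carefully adapt the bad sequence from Theorem~\ref{lemma-Ctan-Cnor} to sit in the positive orthant (which is straightforward once one takes the perturbations on $\Gamma\cup\Gamma_k$ positive). A secondary (and more bookkeeping-heavy) point is the Bouligand normal cone polar calculation, since the right-hand side $T^B_S(\overline{x})\cap(-\mathbb{R}^{N}_+(\overline{x}))$ is written in an unconventional form and needs to be reconciled with the branch-wise polar to confirm the stated equality.
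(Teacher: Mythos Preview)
The paper provides no proof of this theorem; it merely asserts that it is ``the direct result of Theorem~\ref{theorem-Btan-Bnor} and~\ref{lemma-Ctan-Cnor}''. Your proposal therefore supplies an argument where the paper gave none. For three of the four identities --- $T^B_{S\cap\mathbb{R}^N_+}(\overline{x})$, $T^C_{S\cap\mathbb{R}^N_+}(\overline{x})$, and $N^C_{S\cap\mathbb{R}^N_+}(\overline{x})$ --- your plan is correct and the details you sketch go through essentially as written (in particular, your handling of the $\subseteq$ direction for the Clarke tangent cone, which you rightly flagged as nontrivial, works once the bad sequence from Theorem~\ref{lemma-Ctan-Cnor} is taken nonnegative).

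There is, however, a genuine obstruction with the Bouligand normal cone identity
\[
N^B_{S\cap\mathbb{R}^N_+}(\overline{x})=T^B_{S}(\overline{x})\cap(-\mathbb{R}^{N}_+(\overline{x})):
\]
as stated, it is \emph{false}, so the ``reconciliation'' you propose cannot succeed. Take $N=3$, $s=2$, $\overline{x}=(1,0,0)$, so $\Gamma=\{1\}$. Your own tangent cone computation gives $T^B_{S\cap\mathbb{R}^3_+}(\overline{x})=\{d:d_2=0,\,d_3\geq0\}\cup\{d:d_3=0,\,d_2\geq0\}$, whose polar is the \emph{convex} cone $\{v:v_1=0,\,v_2\leq0,\,v_3\leq0\}$. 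But the stated right-hand side equals $\{d:d_2=0,\,d_3\leq0\}\cup\{d:d_3=0,\,d_2\leq0\}$, which is nonconvex and in particular misses $(0,-1,-1)$. The case $|\Gamma|=s$ is even starker: there $T^B_S(\overline{x})=\textup{span}\{\mathrm{e}_i:i\in\Gamma\}$, so the right-hand side equals $\textup{span}\{\mathrm{e}_i:i\in\Gamma\}$, while the true polar $N^B_{S\cap\mathbb{R}^N_+}(\overline{x})$ is its orthogonal complement $\textup{span}\{\mathrm{e}_i:i\notin\Gamma\}$. Hence the difficulty you anticipated in the last paragraph is not a bookkeeping hurdle but an error in the formula itself; your proofs of the other three identities stand.
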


For model $(\ref{p})$, we have the corresponding definition of $\alpha$-stationary point, $N^\sharp$-stationary point and $T^\sharp$-stationary point by substituting
 $S\cap \mathbb{R}_+ ^N$ for $S$, where $\sharp\in\{B,C\}$ stands for the sense of Bouligand tangent cone or Clarke tangent cone.
In order to facilitate the discussion next, we describe a more explicit representation of $\alpha$-stationary point, that is
\begin{equation}\label{NC}
x^* \in P_{S\cap \mathbb{R}^N_+}\left(x^*-\alpha \nabla r(x^*)\right).
\end{equation}

\begin{theorem}\label{th2.1}
For any $\alpha>0$, a vector $x^*\in S\cap \mathbb{R}_+ ^N$ is $\alpha$-stationary point of $(\ref{p})$ if and only if
\begin{eqnarray}\label{NCL}
\nabla_i r(x^*)\left\{
             \begin{array}{lll}
              =0, & \mbox{if}~~ i\in \mathrm{supp}(x^*),\\
             \geq 0 , \mathrm{or} \in[-\frac{1}{\alpha }M_s (x^*),0], &\mbox{if}~~ i \notin \mathrm{supp}(x^*),
             \end{array}
        \right.
\end{eqnarray}
\end{theorem}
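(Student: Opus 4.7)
The plan is to reduce the fixed-point equation \eqref{NC} to a componentwise condition by invoking Proposition \ref{prop1}, and then splitting into cases based on the support of $x^*$. Set $z := x^* - \alpha\nabla r(x^*)$ and $w := \mathrm{P}_{\mathbb{R}^N_+}(z) = \max(z,0)$ (componentwise). By Proposition \ref{prop1}, condition \eqref{NC} is equivalent to $x^* \in \mathrm{P}_S(w)$, i.e.\ there exists an index set $I_s(w)\supseteq \mathrm{supp}(x^*)$ of cardinality at most $s$ such that $x^*_i=w_i$ on $I_s(w)$, $x^*_i=0$ off $I_s(w)$, and $\min_{i\in I_s(w)} w_i \ge \max_{i\notin I_s(w)} w_i$ (using that $w\ge 0$).

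First I would handle the indices $i\in\Gamma := \mathrm{supp}(x^*)$. Since $x^*_i>0$ forces $i\in I_s(w)$ and $w_i=x^*_i>0$, we must have $z_i=w_i=x^*_i$, i.e.\ $\alpha\nabla_i r(x^*)=0$, which yields the first line of \eqref{NCL}. Next I would split the off-support indices $i\notin\Gamma$ into two subcases based on whether $w_i=0$ or $w_i>0$. If $w_i=0$, then $z_i\le 0$, i.e.\ $-\alpha\nabla_i r(x^*)\le 0$, giving $\nabla_i r(x^*)\ge 0$. If $w_i>0$, then $i$ must be excluded from $I_s(w)$ (otherwise $x^*_i=w_i>0$, contradicting $i\notin\Gamma$); the ranking condition on $I_s(w)$ combined with $\mathrm{supp}(x^*)\subseteq I_s(w)$ forces $w_i\le \min_{j\in I_s(w)} w_j \le \min_{j\in\Gamma} x^*_j$. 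Here I need to be careful: this minimum equals $M_s(x^*)$ when $|\Gamma|=s$, but when $|\Gamma|<s$ the set $I_s(w)$ necessarily contains some index with $w_j=0$, forcing $w_i\le 0$ and thereby excluding the subcase $w_i>0$ altogether, consistent with $M_s(x^*)=0$ in that case. In either case, $w_i\le M_s(x^*)$ translates to $-\alpha\nabla_i r(x^*)\le M_s(x^*)$, giving $\nabla_i r(x^*)\in[-M_s(x^*)/\alpha,\,0)$, which together with the previous subcase gives the second line of \eqref{NCL}.

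For the converse, assume \eqref{NCL} holds. I would construct a valid $I_s(w)$ witnessing $x^*\in\mathrm{P}_S(w)$. For $i\in\Gamma$ the first line gives $z_i=x^*_i>0$ hence $w_i=x^*_i$. For $i\notin\Gamma$ either $\nabla_i r(x^*)\ge 0$ (hence $z_i\le x^*_i=0$ and $w_i=0$) or $\nabla_i r(x^*)\in[-M_s(x^*)/\alpha,0]$ (hence $0\le w_i=z_i\le M_s(x^*)$). Choose $I_s(w)$ to be $\Gamma$ together with any $s-|\Gamma|$ additional indices; using the bound $w_i\le M_s(x^*)=\min_{j\in\Gamma} x^*_j$ for off-support indices (valid when $|\Gamma|=s$) or the fact that $w_i=0$ off $\Gamma$ (when $|\Gamma|<s$), the required inequality $\min_{j\in I_s(w)} w_j \ge \max_{j\notin I_s(w)} w_j$ is verified, so $x^* \in \mathrm{P}_S(w) = \mathrm{P}_S\!\cdot\!\mathrm{P}_{\mathbb{R}^N_+}(z) = \mathrm{P}_{S\cap\mathbb{R}^N_+}(z)$ by Proposition \ref{prop1}.

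The main obstacle is not computational but bookkeeping: correctly reconciling the two regimes $|\Gamma|=s$ and $|\Gamma|<s$ through the single quantity $M_s(x^*)$, and carefully handling the set-valued nature of $\mathrm{P}_S$ (so that one must exhibit \emph{some} admissible $I_s(w)$, not show uniqueness). Once the equivalence $x^*\in\mathrm{P}_{S\cap\mathbb{R}^N_+}(z)\iff x^*\in\mathrm{P}_S(w)$ from Proposition \ref{prop1} is in hand, each subcase reduces to a one-line arithmetic check on $z_i=x^*_i-\alpha\nabla_i r(x^*)$.
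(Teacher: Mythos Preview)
Your proposal is correct and follows essentially the same approach as the paper's own proof: both argue componentwise on $z=x^*-\alpha\nabla r(x^*)$, split into on-support vs.\ off-support indices, and subdivide the off-support case according to the sign of $z_i$, treating the two regimes $|\Gamma|=s$ and $|\Gamma|<s$ through the value of $M_s(x^*)$. Your version is somewhat more explicit---you invoke Proposition~\ref{prop1} to factor the projection and spell out the ranking condition on $I_s(w)$---whereas the paper simply asserts the componentwise consequences of the projection, but the underlying argument is identical.
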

\begin{proof}
Suppose $(\ref{NC})$ is satisfied for $x^* $.
If $i\in \mathrm{supp}(x^*)$, then $x^*_i =x^*_i -\alpha \nabla_i r(x^*)$, so that $\nabla_i r(x^*)=0$;
If $i\notin \mathrm{supp}(x^*)$, there are two cases: either $x^*_i -\alpha \nabla_i r(x^*)\leq 0$,
that is $\nabla_i r(x^*)\geq x_i= 0$, or $0\leq x^*_i -\alpha \nabla_i r(x^*)\leq M_s (x^*)$,
that is $-\frac{1}{\alpha}M_s (x^*)\leq  \nabla_i r(x^*)\leq 0$.

On the contrary, assume $(\ref{NCL})$ holds.
If $\|x^*\|_0 <s$, we get $M_s (x^*)=0$, then for $i\in \mathrm{supp}(x^*)$, $\nabla_i r(x^*)=0$, then $x^*-\alpha\nabla_i r(x^*)=x_i^*$
or for $i\notin \mathrm{supp}(x^*)$, $x_i^*-\alpha\nabla_i r(x^*)\leq 0$,
therefore, (\ref{NC}) holds.
If $\|x^*\|_0 =s$, that is $M_s (x^*)>0$. By (\ref{NCL}),
for $i\in \mathrm{supp}(x^*)$, $\alpha\nabla_i r(x^*)= 0$;
for $i\notin \mathrm{supp}(x^*)$,  $x^*-\alpha\nabla_i r(x^*)\leq 0$ or
 $0\leq x_i^*-\alpha \nabla_i r(x^*)\leq M_s (x^*)$,
 so that (\ref{NC}) holds as well.\qed
\end{proof}


One can easily check that when $\|\overline{x}\|_0=s$ it holds  $T^B_{S\cap\mathbb{R}^{N}_+}(\overline{x})=T^B_{S}(\overline{x})$. Therefore the corresponding theorem is derived by Theorems \ref{Btheorem-alpha-N-T}, \ref{Ctheorem-alpha-N-T},
 \ref{th2.1} and Corollary \ref{pro-tan-nor-SN} directly.
\begin{theorem}\label{Btheorem-alpha-N-T-N}For the model $(\ref{p})$ and any $\alpha>0$.\\
\emph{\textbf{A})} Under the concept of Bouligand tangent cone, if~~$\|x^*\|_0=s, x^*\geq 0$, then
$$ \alpha-\text{stationary~point}~~~\Longrightarrow~~~N^B-\text{stationary point}~~~\Longleftrightarrow~~~T^B-\text{stationary point}.$$
\emph{\textbf{B})} Under the concept of Clarke tangent cone, if~~$\|x^*\|_0\leq s, x^*\geq 0$, then
$$ \alpha-\text{stationary~point}~~~\Longrightarrow~~~N^C-\text{stationary point}~~~\Longleftrightarrow~~~T^C-\text{stationary point}.$$
\end{theorem}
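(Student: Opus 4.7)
The plan is to deduce each implication in Parts A and B as a direct specialization of the previously established general-model results (Theorems \ref{Btheorem-alpha-N-T} and \ref{Ctheorem-alpha-N-T}) by substituting the intersection identities from Corollary \ref{pro-tan-nor-SN} and invoking the componentwise characterization of $\alpha$-stationarity from Theorem \ref{th2.1}. No new computation is required; I would assemble these three ingredients in the order below, treating Part B first since it is the cleaner of the two.

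For Part B, Corollary \ref{pro-tan-nor-SN} gives $T^C_{S\cap\mathbb{R}^N_+}(x^*)=T^C_S(x^*)$ and $N^C_{S\cap\mathbb{R}^N_+}(x^*)=N^C_S(x^*)$ regardless of whether $\|x^*\|_0$ equals $s$ or is strictly smaller. Hence the defining relations of $N^C$-stationarity and $T^C$-stationarity for model $(\ref{p})$ are literally identical to those for model $(\ref{f-l-0})$ evaluated at the same $x^*$ with $f=r$, so the equivalence $N^C\Leftrightarrow T^C$ is already supplied by Theorem \ref{Ctheorem-alpha-N-T}. For the implication $\alpha\Rightarrow N^C$, Theorem \ref{th2.1} gives that any $\alpha$-stationary point $x^*\in S\cap\mathbb{R}^N_+$ satisfies $\nabla_i r(x^*)=0$ for $i\in\Gamma:=\mathrm{supp}(x^*)$, and in view of Theorem \ref{lemma-Ctan-Cnor} this is precisely $-\nabla r(x^*)\in\mathrm{span}\{\mathrm{e}_i:i\notin\Gamma\}=N^C_{S\cap\mathbb{R}^N_+}(x^*)$.

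For Part A, I would exploit the observation (noted just above the statement) that when $\|x^*\|_0=s$ one has $T^B_S(x^*)=\mathrm{span}\{\mathrm{e}_i:i\in\Gamma\}$, so every $d\in T^B_S(x^*)$ automatically lies in $\mathbb{R}^N_+(x^*)$; Corollary \ref{pro-tan-nor-SN} then yields $T^B_{S\cap\mathbb{R}^N_+}(x^*)=T^B_S(x^*)$, and the corresponding intersection identity for the normal cone simplifies in the same way because the extra nonnegativity restriction is vacuous on coordinates $i\notin\Gamma$ where $d_i=0$ anyway. Consequently both the $N^B$- and the $T^B$-stationarity conditions for model $(\ref{p})$ reduce to the single requirement $\nabla_i r(x^*)=0$ for $i\in\Gamma$, whose equivalence and whose implication from $\alpha$-stationarity are exactly what Theorems \ref{Btheorem-alpha-N-T} and \ref{th2.1} already deliver in the $\|x^*\|_0=s$ row of Table \ref{relationship-B}.

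The main obstacle is essentially clerical: one must keep straight which of the two identities in Corollary \ref{pro-tan-nor-SN} (tangent versus normal) is being invoked, and verify carefully that under the side condition $\|x^*\|_0=s$ the nonnegativity constraints activate no additional restrictions on admissible Bouligand directions. I would not expect any genuinely new analysis here; once this bookkeeping is completed, the theorem follows by direct inspection of the same componentwise conditions that were read off in Tables \ref{relationship-B} and \ref{relationship-C}.
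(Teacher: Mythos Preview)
Your proposal is correct and mirrors the paper's own treatment: the paper simply remarks that $T^B_{S\cap\mathbb{R}^N_+}(x^*)=T^B_S(x^*)$ when $\|x^*\|_0=s$ and then declares the result to follow directly from Theorems \ref{Btheorem-alpha-N-T}, \ref{Ctheorem-alpha-N-T}, \ref{th2.1} and Corollary \ref{pro-tan-nor-SN}. You have supplied exactly this argument with the bookkeeping spelled out, so there is nothing to add.
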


Combining Theorems \ref{B-Necessary-10} and \ref{B-Sufficient-10}, we derive the following second order optimality result.

\begin{theorem}[Second Order Optimality]\label{B-Sufficient-N}
If $x^{*}\in S\cap \mathbb{R}^N_+$ is the optimal solution of $(\ref{p})$, then for $0<\alpha<\frac{1}{\lambda_{\max}(A^TA)}$, $x^{*}$ is also the $\alpha$-stationary point of $(\ref{p})$, and moreover,
\begin{eqnarray}\label{B-Necessary-1-N}
d^{\top}A^TAd\geq 0,~~~~\forall~d\in T^C_S(x^*).\end{eqnarray}
On the contrary, if $x^{*}\in S\cap\mathbb{R}^{N}_{+}$ is an $\alpha$-stationary point of $(\ref{p})$ and satisfies
\begin{eqnarray}\label{B-Sufficient-1-N}
d^{\top}A^TAd>0,~~~~\forall~d\in T^C_S(x^*),\end{eqnarray}
then $x^{*}$ is the strictly locally optimal solution of $(\ref{p})$. Moreover, there is a $\gamma>0$ and $\delta>0$, when any $x\in B(x^*, \delta)\cap S\cap\mathbb{R}^{N}_{+}$, it holds
\begin{eqnarray}\label{B-Sufficient-2-N} r(x)\geq r(x^*)+\gamma\|x-x^*\|^{2}.\end{eqnarray}
 \end{theorem}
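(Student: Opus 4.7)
The statement is the specialization of Theorems \ref{B-Necessary-10} and \ref{B-Sufficient-10} to the quadratic $r(x) = \tfrac{1}{2}\|Ax-b\|^2$ on the feasible set $S\cap\mathbb{R}^N_+$ in place of $S$. Three features make the adaptation clean: (i) $\nabla r(x) = A^T(Ax-b)$ is Lipschitz with constant $L_r = \lambda_{\max}(A^TA)$, so Assumption \ref{ass0} is automatic; (ii) $\nabla^2 r \equiv A^TA$ is constant and the Taylor expansion of $r$ carries no higher-order remainder; (iii) by Theorem \ref{pro-tan-nor-SN}, $T^C_{S\cap\mathbb{R}^N_+}(x^*) = T^C_S(x^*)$, so the cone appearing in both (\ref{B-Necessary-1-N}) and (\ref{B-Sufficient-1-N}) is the correct local linearization. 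My overall plan is therefore to rerun the two proofs of Section \ref{Section2} with $f\leftarrow r$ and $S\leftarrow S\cap\mathbb{R}^N_+$.

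For the necessary part, I would first invoke Theorem~2.2 of \cite{Beck13} in its nonnegative-sparse form (reading the projection $\textmd{P}_S\cdot\textmd{P}_{\mathbb{R}^N_+}$ off Proposition \ref{prop1}) to deduce that any optimizer $x^*$ of (\ref{p}) is an $\alpha$-stationary point of (\ref{p}) whenever $\alpha < 1/\lambda_{\max}(A^TA)$. By Theorem \ref{Btheorem-alpha-N-T-N}, $x^*$ is then $T^C$-stationary, and the characterization in Table \ref{relationship-C} yields $d^{\top}\nabla r(x^*) = 0$ for every $d \in T^C_S(x^*)$. For any such $d$ and small $\tau>0$ the perturbation $x^* + \tau d$ lies in $S \cap \mathbb{R}^N_+$ (its support sits in $\Gamma=\textup{supp}(x^*)$ and $x^*_i > 0$ on $\Gamma$ keeps the coordinates nonnegative), and the exact expansion $r(x^* + \tau d) - r(x^*) = \tfrac{\tau^2}{2}\, d^{\top} A^TA d$ combined with optimality delivers (\ref{B-Necessary-1-N}).

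For the sufficient part, I would replay the contradiction argument of Theorem \ref{B-Sufficient-10}. The $\alpha$-stationary hypothesis together with (\ref{NCL}) forces $\nabla r(x^*)_i = 0$ on $\Gamma$, so $d^{\top}\nabla r(x^*) = 0$ for every $d \in T^C_S(x^*)$. Supposing (\ref{B-Sufficient-2-N}) failed, there would exist $\{x^k\} \subset S \cap \mathbb{R}^N_+$ with $x^k \to x^*$, $x^k \neq x^*$, and $r(x^k) - r(x^*) \leq \tfrac{1}{k}\|x^k - x^*\|^2$; by finiteness of the possible support patterns one may pass to a subsequence with $\textup{supp}(x^k) \equiv \Lambda \supseteq \Gamma$. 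Setting $d^k = (x^k - x^*)/\|x^k - x^*\|$ and extracting a further subsequential limit $\bar d$ with $\|\bar d\|=1$, the exactness of the quadratic Taylor expansion delivers $\bar d^{\top}A^TA\bar d \leq 0$; the strict condition (\ref{B-Sufficient-1-N}) and compactness of the unit sphere inside $T^C_S(x^*)$ then close the contradiction and simultaneously produce $\gamma = \tfrac{1}{2}\min\{ d^{\top} A^TA d : d\in T^C_S(x^*),\, \|d\|=1\} > 0$.

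The main obstacle is the last step -- placing $\bar d$ inside $T^C_S(x^*) = \textup{span}\{\mathrm{e}_i : i \in \Gamma\}$. When $\Lambda = \Gamma$ this is automatic. When $\Lambda \supsetneq \Gamma$ (possible only if $|\Gamma| < s$, in which case $M_s(x^*) = 0$ and (\ref{NCL}) forces $\nabla_i r(x^*) \geq 0$ for every $i\notin\Gamma$), $\bar d$ may acquire nonnegative mass on $\Lambda\setminus\Gamma$, so the sufficient condition (\ref{B-Sufficient-1-N}) does not apply to $\bar d$ directly. The remedy is that the first-order term $\frac{d^{k\top}\nabla r(x^*)}{\|x^k-x^*\|}$ in the Taylor expansion is nonnegative by the sign information just derived, and it either blows up (contradicting $r(x^k)-r(x^*)\le\tfrac{1}{k}\|x^k-x^*\|^2$ immediately) or vanishes in the limit, forcing $\bar d^{\top}\nabla r(x^*)=0$ and effectively reducing the analysis to a direction in $T^C_S(x^*)$; this case split is the only substantive bookkeeping beyond a direct port of Theorem \ref{B-Sufficient-10}.
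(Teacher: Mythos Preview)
Your overall plan—porting Theorems~\ref{B-Necessary-10} and~\ref{B-Sufficient-10} to $r$ on $S\cap\mathbb{R}^N_+$ via $T^C_{S\cap\mathbb{R}^N_+}(x^*)=T^C_S(x^*)$ from Theorem~\ref{pro-tan-nor-SN}—is exactly the paper's, which offers no argument beyond the sentence ``Combining Theorems~\ref{B-Necessary-10} and~\ref{B-Sufficient-10}.'' The necessary direction goes through as you describe.

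For the sufficient direction you have correctly located a soft spot that the paper's own proof of Theorem~\ref{B-Sufficient-10} glosses over: it simply asserts $\textup{supp}(x^k)=\textup{supp}(x^*)$ along the violating sequence, which is unjustified when $|\Gamma|<s$. Your patch, however, does not close this gap. The sign information from~(\ref{NCL}) does give $(x^k-x^*)^\top\nabla r(x^*)\ge 0$, and since $A^TA\succeq 0$ one can drop that first-order term and still obtain $\bar d^{\,\top}A^TA\bar d\le 0$ in the limit. But the concluding step—that $\bar d^\top\nabla r(x^*)=0$ ``effectively reduces the analysis to a direction in $T^C_S(x^*)$''—does not follow: any index $i\in\Lambda\setminus\Gamma$ on which $\nabla_i r(x^*)=0$ can carry nonzero mass in $\bar d$ without disturbing $\bar d^\top\nabla r(x^*)=0$, and then hypothesis~(\ref{B-Sufficient-1-N}) says nothing about $\bar d$.

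In fact no argument can repair this, because the sufficient direction as stated fails when $|\Gamma|<s$. Take $N=4$, $M=3$, $s=2$, $A=[\,I_3\;\; 0\,]\in\mathbb{R}^{3\times 4}$, $b=(1,0,0)^\top$ and $x^*=(1,0,0,0)^\top$. Then $\nabla r(x^*)=0$, so $x^*$ is $\alpha$-stationary for every $\alpha>0$; $T^C_S(x^*)=\textup{span}\{\mathrm{e}_1\}$ and $A^TA=\textup{diag}(1,1,1,0)$, so (\ref{B-Sufficient-1-N}) holds for every nonzero $d\in T^C_S(x^*)$. Yet $x(t)=(1,0,0,t)^\top\in S\cap\mathbb{R}^4_+$ for $t>0$ satisfies $r(x(t))=r(x^*)=0$, so $x^*$ is not a strict local minimizer and (\ref{B-Sufficient-2-N}) fails for every $\gamma>0$. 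Both the paper's derivation and your patch are therefore valid only in the regime $\|x^*\|_0=s$, where $\textup{supp}(x^k)=\Gamma$ holds automatically for large $k$ and no case split is needed.
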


\section{Gradient Support Projection Algorithm}
We now develop the gradient support projection algorithm with Armijo-type stepsize rule which is shortly denoted as GSPA. For simplicity, we utilize $L_r:=\lambda_{\max}(A^TA)$ to denote the Lipschtiz constant of $\nabla r(x)$.

\begin{table}[h]
 \caption{The framework of GSPA.\label{framework-GSPA}}\vspace{2mm}
\renewcommand{\arraystretch}{1.2}\addtolength{\tabcolsep}{0pt}
\centering
\begin{tabular}{l}\hline
\textbf{Step 0}~~Initialize $x^0=0$, $\Gamma^0=\mathrm{supp}(\textmd{P}_{S\cap \mathbb{R}_+^N}(A^Tb))$, $0<\alpha_0<\frac{1}{L_r}$, $0<\sigma\leq\frac{1}{4L_r}$, $0<\beta<1, \epsilon>0$. Set $k\Leftarrow0$;   \\
\textbf{Step 1}~~Compute~~~~~~~~~~~~$\tilde{x}^{k+1}=\textmd{P}_{S\cap \mathbb{R}_+^N}\big(x^k-\alpha_0 \nabla r(x^k)\big);$\\
\textbf{Step 2}~~If $\mathrm{supp}(\tilde{x}^{k+1})=\Gamma^k$, then $x^{k+1}=\tilde{x}^{k+1},\Gamma^{k+1}=\mathrm{supp}(x^{k+1})$;\\
\hspace{1.2cm}Else compute $x^{k+1}=\textmd{P}_{S\cap \mathbb{R}_+^N}\big(x^k-\alpha_k \nabla r(x^k)\big)$, $\Gamma^{k+1}=\mathrm{supp}(x^{k+1})$, where $\alpha_k=\alpha_0\beta^{m_k}$ and $m_k$\\
\hspace{1.2cm}is the smallest positive integer $m$ such that \\
\hspace{3.5cm}$r(x^{k}(\alpha_0\beta^m))\leq r(x^k)-\frac{\sigma}{2} \frac{\|x^{k}(\alpha_0\beta^m)-x^k\|^2}{(\alpha_0\beta^{m})^2},$\\
\hspace{1.2cm}here $x^{k}(\alpha)=\textmd{P}_{S\cap \mathbb{R}_+^N}(x^k-\alpha \nabla r(x^k)) $;\\
\textbf{Step 3}~~If $\|x^{k+1}-x^{k}\|\leq \epsilon$, then stop; Otherwise $k \Leftarrow k+1$, go to \textbf{Step 1}.\\
\hline
\end{tabular}\end{table}

\noindent\textbf{Remark~}Compared with IHT in \cite{Beck13}, we mainly add Armijo-type stepsize rule in Step 2, which is well defined by Lemma \ref{le3.1}.
\begin{lemma}\label{le3.1}
Let $\left\{x^k\right\}$ be the iterative point in Step 2 in GSPA. Then
 \begin{equation}\label{objective-descent-12}
r(x^{k}(\alpha))\leq\left\{\begin{array}{ll}
                             r(x^k)-\frac{1}{2}(\frac{1}{\alpha}- L_r)\|x^{k}(\alpha)-x^{k}\|^2,&\alpha \in \left(0,\frac{1}{L_r}\right) \\
                             ~~&\\
                             r(x^k)-\frac{\sigma}{2} \frac{\|x^{k}(\alpha)-x^k\|^2}{\alpha^2},&\alpha \in \left[\frac{1-\sqrt{1-4\sigma L_r}}{2L_r},\frac{1+\sqrt{1-4\sigma L_r}}{2L_r}\right].
                           \end{array}
\right.
\end{equation}
\end{lemma}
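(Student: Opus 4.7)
The plan is to combine a standard descent inequality coming from Lipschitz continuity of $\nabla r$ with the defining property of the projection $\textmd{P}_{S\cap\mathbb{R}_+^N}$, and then determine the range of $\alpha$ for which the resulting coefficient dominates $\sigma/\alpha^2$.

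First I would invoke the descent lemma: since $\nabla r(x)=A^\top(Ax-b)$ is Lipschitz with constant $L_r=\lambda_{\max}(A^\top A)$, we have
\begin{equation*}
r(x^k(\alpha))\le r(x^k)+\nabla r(x^k)^\top(x^k(\alpha)-x^k)+\tfrac{L_r}{2}\|x^k(\alpha)-x^k\|^2.
\end{equation*}
Next, I would exploit that $x^k(\alpha)=\textmd{P}_{S\cap\mathbb{R}_+^N}(x^k-\alpha\nabla r(x^k))$ is a minimizer of $\|z-(x^k-\alpha\nabla r(x^k))\|^2$ over $z\in S\cap\mathbb{R}_+^N$. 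Because the previous iterate $x^k$ lies in $S\cap\mathbb{R}_+^N$ by construction, testing the minimization against $z=x^k$ and expanding the squared norm gives
\begin{equation*}
\|x^k(\alpha)-x^k\|^2+2\alpha\nabla r(x^k)^\top(x^k(\alpha)-x^k)\le 0,
\end{equation*}
so that $\nabla r(x^k)^\top(x^k(\alpha)-x^k)\le -\tfrac{1}{2\alpha}\|x^k(\alpha)-x^k\|^2$. This is the key step; the non-convexity of $S\cap\mathbb{R}_+^N$ is harmless here because we only use that $x^k$ is a feasible competitor, not any first-order condition. Substituting into the descent inequality yields
\begin{equation*}
r(x^k(\alpha))\le r(x^k)-\tfrac{1}{2}\bigl(\tfrac{1}{\alpha}-L_r\bigr)\|x^k(\alpha)-x^k\|^2,
\end{equation*}
which is the first case whenever $\alpha\in(0,1/L_r)$ (giving a positive coefficient).

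Finally, for the second case I would compare coefficients: the desired bound holds once $\tfrac{1}{2}(\tfrac{1}{\alpha}-L_r)\ge \tfrac{\sigma}{2\alpha^2}$, equivalently $L_r\alpha^2-\alpha+\sigma\le 0$. Since $0<\sigma\le 1/(4L_r)$, the discriminant $1-4\sigma L_r$ is nonnegative, and this quadratic in $\alpha$ is $\le 0$ precisely on the interval between its two real roots, namely $\alpha\in\bigl[\tfrac{1-\sqrt{1-4\sigma L_r}}{2L_r},\tfrac{1+\sqrt{1-4\sigma L_r}}{2L_r}\bigr]$. On this interval the first inequality already established immediately implies the second. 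The main obstacle, which is mild, is verifying that the projection inequality still applies to the nonconvex set $S\cap\mathbb{R}_+^N$; everything else is algebra around the quadratic $L_r\alpha^2-\alpha+\sigma$.
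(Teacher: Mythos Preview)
Your proposal is correct and follows essentially the same approach as the paper: both combine the projection inequality (testing against the feasible point $x^k$) with the Lipschitz/quadratic upper bound to obtain $r(x^k(\alpha))\le r(x^k)-\tfrac{1}{2}(\tfrac{1}{\alpha}-L_r)\|x^k(\alpha)-x^k\|^2$. You actually supply more detail than the paper for the second case, spelling out the quadratic inequality $L_r\alpha^2-\alpha+\sigma\le 0$ that the paper merely alludes to with ``by the definition of $\alpha$.''
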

\begin{proof}
From the algorithm in Step 2, we have $$x^{k}(\alpha)\in \mathrm{argmin}\left\{\|x-x^k+\alpha \nabla r(x^k) \|^2, \|x\|_0\leq s, x\geq 0 \right\},$$
which implies that $\|x^{k}(\alpha)-x^k+\alpha \nabla r(x^k) \|^2 \leq \| \alpha\nabla r(x^k) \|^2$, that is
\begin{equation}\label{eq8}
\|x^{k}(\alpha)-x^k\|^2\leq -2\alpha \langle \nabla r(x^k),x^{k}(\alpha)-x^k\rangle.
\end{equation}
From
\begin{eqnarray}
r(x^{k}(\alpha))&=&r(x^{k})+\langle \nabla r(x^{k}),x^{k}(\alpha)-x^{k} \rangle+\frac{1}{2}\|A(x^{k}(\alpha)-x^{k})\|^2\nonumber\\
       &\leq& r(x^{k})-\frac{1}{2\alpha}\|x^{k}(\alpha)-x^{k}\|^2+\frac{L_r}{2}\|(x^{k}(\alpha)-x^{k})\|^2
\end{eqnarray}
we can obtain the desired result by the definition of $\alpha$.
\qed\end{proof}

Using Lemma \ref{le3.1} and other properties of iterative sequence, the convergence properties of GSPA can be established.
\begin{theorem}\label{le3.2}
Let the sequence $\{x^k\}$ be generated by GSPA, we have \\
$\left(i\right)$~$\underset{k\rightarrow \infty}{\lim}\frac{\|x^{k+1}-x^{k}\|}{\alpha_k}=0$;\\
$\left(ii\right)$ any accumulation point of $\{x^k\}$ is the $\alpha$-stationary point of $(\ref{p});$\\
$\left(iii\right)$ $\lim_{k\rightarrow\infty}\|\nabla^C_{S\cap \mathbb{R}^N_+}r(x^k)\|=0$.
\end{theorem}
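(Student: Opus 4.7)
I would prove the three parts in the order (i), (iii), (ii): part~(i) is the ``engine'', part~(iii) is a direct consequence of (i) via the projection formula, and part~(ii) needs additional control on the stepsize. For part~(i) I begin from Lemma~\ref{le3.1}. In the support-unchanged branch of Step~2, $\alpha_k=\alpha_0<1/L_r$, so the first inequality of Lemma~\ref{le3.1} rewrites as $r(x^k)-r(x^{k+1})\geq \tfrac{\alpha_0(1-\alpha_0 L_r)}{2}\|x^{k+1}-x^k\|^2/\alpha_k^2$; in the support-changed branch the Armijo test itself enforces $r(x^k)-r(x^{k+1})\geq \tfrac{\sigma}{2}\|x^{k+1}-x^k\|^2/\alpha_k^2$. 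With $c:=\min\{\alpha_0(1-\alpha_0 L_r)/2,\sigma/2\}>0$ the descent is uniform, and since $\{r(x^k)\}$ is monotone and bounded below by $0$, telescoping gives $\sum_k\|x^{k+1}-x^k\|^2/\alpha_k^2\leq r(x^0)/c<\infty$, whence (i).

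For part~(iii), Theorems~\ref{lemma-Ctan-Cnor} and~\ref{pro-tan-nor-SN} identify $\|\nabla^C_{S\cap\mathbb{R}^N_+}r(x^k)\|^2=\sum_{i\in\Gamma^k}(\nabla r(x^k))_i^2$. For $k\geq 1$ and $i\in\Gamma^k$ (so $x^k_i>0$), reading the update $x^k=P_{S\cap\mathbb{R}^N_+}(x^{k-1}-\alpha_{k-1}\nabla r(x^{k-1}))$ through Proposition~\ref{prop1} forces $x^k_i=x^{k-1}_i-\alpha_{k-1}\nabla r(x^{k-1})_i$, hence $|\nabla r(x^{k-1})_i|\leq\|x^{k-1}-x^k\|/\alpha_{k-1}$. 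The Lipschitz bound $|\nabla r(x^k)_i-\nabla r(x^{k-1})_i|\leq L_r\|x^k-x^{k-1}\|$ upgrades this to
\[
|\nabla r(x^k)_i|\ \leq\ \tfrac{\|x^{k}-x^{k-1}\|}{\alpha_{k-1}}+L_r\|x^{k}-x^{k-1}\|.
\]
Squaring, summing over at most $s$ indices, and using $\|x^{k}-x^{k-1}\|\leq\alpha_0\cdot\|x^{k}-x^{k-1}\|/\alpha_{k-1}\to 0$ from part~(i) make both contributions vanish, proving (iii).

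For part~(ii), let $x^*$ be any accumulation point along $x^{k_j}\to x^*$. The central step is a uniform lower bound $\alpha_k\geq\bar\alpha>0$: in the support-unchanged branch $\alpha_k=\alpha_0$, and in the support-changed branch the second clause of Lemma~\ref{le3.1} together with the parameter choice $\sigma\leq 1/(4L_r)$ makes the Armijo-admissible interval $\bigl[(1-\sqrt{1-4\sigma L_r})/(2L_r),(1+\sqrt{1-4\sigma L_r})/(2L_r)\bigr]$ non-empty, so once $\alpha_0\beta^m$ enters this interval the test succeeds and backtracking terminates with $\alpha_k\geq\beta(1-\sqrt{1-4\sigma L_r})/(2L_r)$. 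Taking $\bar\alpha:=\min\{\alpha_0,\beta(1-\sqrt{1-4\sigma L_r})/(2L_r)\}$, part~(i) yields $\|x^{k_j+1}-x^{k_j}\|\to 0$ and so $x^{k_j+1}\to x^*$; after a further extraction $\alpha_{k_j}\to\alpha^*\in[\bar\alpha,\alpha_0]$. Passing to the limit in the projection inequality
\[
\|x^{k_j+1}-(x^{k_j}-\alpha_{k_j}\nabla r(x^{k_j}))\|\leq\|y-(x^{k_j}-\alpha_{k_j}\nabla r(x^{k_j}))\|,\qquad\forall\,y\in S\cap\mathbb{R}^N_+,
\]
and invoking continuity of $\nabla r$ produces $x^*\in P_{S\cap\mathbb{R}^N_+}(x^*-\alpha^*\nabla r(x^*))$, certifying $x^*$ as an $\alpha^*$-stationary point of~(\ref{p}).

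The main obstacle is the uniform lower bound on $\alpha_k$ in part~(ii): without it the limit of the projection inequality degenerates to the tautology $\|0\|\leq\|y-x^*\|$ and gives no information about $x^*$. The Step~0 conditions $\alpha_0<1/L_r$ and $\sigma\leq 1/(4L_r)$ are precisely what keep the Armijo-admissible interval of Lemma~\ref{le3.1} non-empty and prevent backtracking from degenerating; a delicate point to check carefully is that the geometric factor $\beta$ does not let the backtracking jump across this interval, which is the only subtle piece of the argument.
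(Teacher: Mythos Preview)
Your argument for part~(i) coincides with the paper's: both extract the uniform descent constant $c=\min\{\alpha_0(1-\alpha_0L_r)/2,\,\sigma/2\}$ from Lemma~\ref{le3.1} and telescope. For parts~(ii) and~(iii), however, you take genuinely different routes. For~(iii) the paper invokes the Calamai--Mor\'e variational identity $\|\nabla^C_{S\cap\mathbb{R}^N_+}r(x^k)\|=\max\{\langle-\nabla r(x^k),v\rangle:v\in T^C,\|v\|\le1\}$ and then uses that the projection residual $x^{k+1}-(x^k-\alpha_k\nabla r(x^k))$ is orthogonal to the linear subspace $T^C_{S\cap\mathbb{R}^N_+}(x^{k+1})$; your approach instead reads the projected gradient off componentwise as $\sum_{i\in\Gamma^k}(\nabla_ir(x^k))^2$, recovers $\nabla_ir(x^{k-1})=(x^{k-1}_i-x^k_i)/\alpha_{k-1}$ for $i\in\Gamma^k$ directly from Proposition~\ref{prop1}, and closes with Lipschitz continuity. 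This is more elementary and avoids the external reference. For~(ii) the paper argues coordinate by coordinate, splitting into $i\in\mathrm{supp}(x^*)$ and $i\notin\mathrm{supp}(x^*)$ and verifying the characterization of Theorem~\ref{th2.1} in the limit; you instead pass to the limit in the single projection inequality $\|x^{k_j+1}-(x^{k_j}-\alpha_{k_j}\nabla r(x^{k_j}))\|\le\|y-(x^{k_j}-\alpha_{k_j}\nabla r(x^{k_j}))\|$, which is cleaner and requires only closedness of $S\cap\mathbb{R}^N_+$ and continuity of $\nabla r$. Both proofs hinge on a uniform lower bound $\alpha_k\ge\bar\alpha>0$; the paper simply asserts it, while you derive it from the second clause of Lemma~\ref{le3.1} and honestly flag the residual ``jump--over'' concern. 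Note that your stated bound $\beta(1-\sqrt{1-4\sigma L_r})/(2L_r)$ is slightly off---once the backtracking enters $[a,b]$ you actually get $\alpha_k\ge a$ (or $\alpha_k>\beta b$ if entering from above)---and the jump is excluded precisely when $\beta\ge a/b=(1-\sqrt{1-4\sigma L_r})/(1+\sqrt{1-4\sigma L_r})$, a mild condition neither you nor the paper makes explicit.
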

\begin{proof}
$(\romannumeral1)$ From (\ref{objective-descent-12}), we derive that $r(x^{k})-r(x^{k+1})\geq c \frac{\|x^{k+1}-x^k\|^2}{\alpha_{k}^2}$, where $c=\min \{\frac{\alpha_0-L_r\alpha_0^2}{2}, \frac{\sigma}{2}\}$.
Then
\begin{eqnarray*}
\sum^{\infty}_{k=0}\frac{\|x^{k+1}-x^k\|^2}{\alpha_{k}^2}\leq \frac{1}{c}\sum^{\infty}_{k=0}\left( r(x^{k})-r(x^{k+1})\right)= \frac{1}{c}r(x^{0})<+\infty,\end{eqnarray*}
 which signifies
$\underset{k\rightarrow \infty}{\lim}\frac{\|x^{k+1}-x^k\|}{\alpha_k}=0$. Since $\alpha_k$ is bounded from below by a positive constant, we conclude that $\underset{k\rightarrow \infty}{\lim}\|x^{k+1}-x^k\|=0.$

$(\romannumeral2)$ Suppose that $x^*$ is an accumulate point of the sequence $\{x^k\}$, then there exists a subsequence $\{x^{k_n}\}$ that
converges to $x^*$. By ($\romannumeral1)$, $\underset{n\rightarrow \infty}{\lim}x^{k_n+1}=x^*$. Based on $x^{k_n+1}=\textmd{P}_{S\cap \mathbb{R}_+^N}\big(x^{k_n}-\alpha \nabla r(x^{k_n})\big)$ in Step 2, we consider two cases.

Case 1. $i\in \mathrm{supp}(x^*)$. The convergence of $\{x^{k_n}\}$ and $\{x^{k_n+1}\}$ guarantees that for some $n_1>0$,
$x_i^{k_n}>0$, $x_i^{k_n+1}>0$ for all $n>n_1$. The definition of the projection on $S\cap \mathbb{R}_+^N$ shows that
$$ x_i^{k_n+1}=x_i^{k_n}-\alpha \nabla_i r(x^{k_n}).$$
Taking $n\rightarrow\infty$, we have $\nabla_i r(x^*)=0$.

Case 2. $i\notin \mathrm{supp}(x^*)$. If there exists an $n_2>0$ such that for all $n>n_2$, $x_i^{k_n+1}=0$, the projection implies that
$$
x_i^{k_n}-\alpha\nabla_i r(x^{k_n})\leq 0~ \mathrm{or}~~0<x_i^{k_n}-\alpha\nabla_i r(x^{k_n})\leq M_s(x^{k_n+1}).
$$
Letting $n\rightarrow\infty$ and exploiting the continuity of the function $M_s$, we obtain that
$$\nabla_i r(x^*)\geq 0~ \mathrm{or}~~-M_s(x^{*})\leq\alpha\nabla_i r(x^*)\leq 0.$$
On the other hand, if there exists an infinite number of indices of $k_n$ for $x_i^{k_n+1}>0$, as the same proof in Case 1, it follows that
$\nabla_i r(x^*)=0$.
Since $\alpha_k$ is bounded from below by a positive constant, we have
\begin{eqnarray}
 \nabla_i r(x^*)\left\{
             \begin{array}{lll}
              =0, & \mbox{if}~~ i\in \mathrm{supp}(x^*),\\
             \geq 0 , \mathrm{or} \in[-\frac{1}{\alpha}M_s (x^*),0], & \mbox{if}~~i \notin \mathrm{supp}(x^*);
             \end{array}
        \right.
\end{eqnarray}
which means $x^*$ is an $\alpha$-stationary point of (\ref{p}) by Theorem \ref{th2.1}.

$(\romannumeral3)$ 
From Theorem \ref{pro-tan-nor-SN}, $T^C_{S\cap\mathbb{R}^N_+}(x^k)=\{~d\in\mathbb{R}^{N}~|~\textup{supp}(d)\subseteq \textup{supp}(x^k)~\}$ is a subspace.
Then $$\|\nabla^C_{S\cap \mathbb{R}^N_+}r(x^k)\|=\max\{\langle -\nabla r(x^k), v^k\rangle|v^k\in T^C_{S\cap\mathbb{R}^N_+}(x^k),\|v^k\|\leq 1\},$$
see Lemma 3.1 in \cite{Calamai87}. we have for any $\eta>0$, there is a $v^k \in T^C_{S\cap\mathbb{R}^N_+}(x^k)$ with $\|v^k\|=1$ satisfying
\begin{equation}\label{pro-grad}
 \|\nabla_{S\cap\mathbb{R}^N_+}r(x^k)\|\leq -\langle \nabla r(x^k),v^k\rangle+\eta.\end{equation}
For all $z^{k+1}\in T^C_{S\cap\mathbb{R}^N_+}(x^{k+1})$ and $x^{k+1}=\textmd{P}_{S\cap \mathbb{R}_+^N}\big(x^k-\alpha_k \nabla r(x^k)\big)$,
$x^{k+1}-(x^k-\alpha_k \nabla r(x^k))$ is orthogonal to $T^C_{S\cap\mathbb{R}^N_+}(x^{k+1})$, which yields that
$$\langle x^{k+1}-(x^k-\alpha_k \nabla r(x^k)),z^{k+1}-x^{k+1}\rangle=0.$$
Letting $v^{k+1}=\frac{z^{k+1}-x^{k+1}}{\|z^{k+1}-x^{k+1}\|}\in T^C_{S\cap\mathbb{R}^N_+}(x^{k+1})$, with Cauchy-Schwartz inequality, the above equation leads to
\begin{equation}\label{delta-r}
-\langle\nabla r(x^k),v^{k+1}\rangle\leq \frac{\|x^{k+1}-x^k\|}{\alpha_k}.\end{equation}
From ($\romannumeral1)$, $\limsup_{k\rightarrow \infty}-\langle\nabla r(x^k),v^{k+1}\rangle\leq 0$.
Combining
$$
-\langle\nabla r(x^{k+1}),v^{k+1}\rangle=-\langle\nabla r(x^{k+1})-\nabla r(x^k),v^{k+1}\rangle-\langle \nabla r(x^k),v^{k+1}\rangle
$$
with (\ref{delta-r}), ($\romannumeral1)$ and Lipschitz continuity of $\nabla r(x)$, we have
$$\limsup_{k\rightarrow \infty}-\langle\nabla r(x^{k+1}),v^{k+1}\rangle\leq 0.$$
By (\ref{pro-grad}) and the arbitrariness of $\eta$, we can prove the result.
\qed
\end{proof}

In order to attain the result that $\{x^k\}$ converges to a local minimizer of (\ref{p}), we need the following assumption and lemma.
\begin{assumption}(\cite{Beck13})\label{s-regular}
Matrix $A$ is $s$-regular if any $s$ of its columns are linearly independent, namely,
\begin{eqnarray}\label{s-regular-N}d^{\top}A^{\top}Ad>0,~~~\forall~\|d\|_0\leq s.\end{eqnarray}
\end{assumption}
\begin{theorem}\label{th3.3}
Let the sequence $\{x^k\}$ be generated by GSPA, then $\{x^k\}$ converges to a local minimizer of $(\ref{p})$ if Assumption \ref{s-regular} holds.
\end{theorem}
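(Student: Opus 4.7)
The plan is to combine $s$-regularity with the already-proven $\alpha$-stationarity of every accumulation point (Theorem \ref{le3.2}(ii)) to promote each accumulation point to a strict local minimizer via Theorem \ref{B-Sufficient-N}, and then to exploit the resulting quadratic growth bound together with $r(x^k)\downarrow r^{*}$ and $\|x^{k+1}-x^k\|\to 0$ in an inductive ``trap'' argument that pins the whole tail of the sequence inside an arbitrarily small ball around a fixed accumulation point.

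First I would establish boundedness of $\{x^k\}$. Monotonicity of $r(x^k)$ yields $\|Ax^k-b\|\le\sqrt{2r(x^0)}$, so $\|Ax^k\|$ is uniformly bounded. Each iterate has support $I_k$ with $|I_k|\le s$, and there are only finitely many index sets of size at most $s$; by Assumption \ref{s-regular}, every submatrix $A_I$ with $|I|\le s$ has full column rank, so
\[
\sigma:=\min_{|I|\le s}\sqrt{\lambda_{\min}(A_I^{\top}A_I)}>0,
\]
which gives $\|x^k\|\le\|Ax^k\|/\sigma$. Hence $\{x^k\}$ is bounded and has at least one accumulation point $x^{*}$.

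Next I would upgrade each accumulation point to a strict local minimizer. By Theorem \ref{le3.2}(ii), $x^{*}$ is $\alpha$-stationary for (\ref{p}). Theorem \ref{lemma-Ctan-Cnor} gives $T^C_S(x^{*})=\textup{span}\{\mathrm{e}_i: i\in\textup{supp}(x^{*})\}$, so every nonzero $d\in T^C_S(x^{*})$ satisfies $\|d\|_0\le|\textup{supp}(x^{*})|\le s$, and $s$-regularity yields $d^{\top}A^{\top}Ad>0$. The second-order sufficient condition (\ref{B-Sufficient-1-N}) in Theorem \ref{B-Sufficient-N} therefore applies and supplies constants $\delta,\gamma>0$ (depending on $x^{*}$) such that
\[
r(x)\ \ge\ r(x^{*})+\gamma\|x-x^{*}\|^{2}\qquad\forall\,x\in B(x^{*},\delta)\cap S\cap\mathbb{R}^N_+.
\]

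Finally I would close the convergence argument. The non-increasing nonnegative sequence $r(x^k)$ converges to some $r^{*}\ge 0$, and by continuity $r(x^{*})=r^{*}$. Fix $\eta\in(0,\delta/2)$. Using Theorem \ref{le3.2}(i) together with the lower bound on $\alpha_k$ furnished by Lemma \ref{le3.1} (the Armijo exponent $m_k$ is uniformly bounded, so $\alpha_k$ stays above a positive constant), I can choose $K$ large enough that $\|x^K-x^{*}\|<\eta$, $\|x^{k+1}-x^k\|<\eta$ for all $k\ge K$, and $r(x^K)-r^{*}<\gamma\eta^{2}$. I then claim by induction on $k\ge K$ that $\|x^k-x^{*}\|<\eta$: assuming it for $k$, the triangle inequality gives $\|x^{k+1}-x^{*}\|<2\eta<\delta$, so $x^{k+1}$ sits in the quadratic-growth ball and
\[
\gamma\|x^{k+1}-x^{*}\|^{2}\ \le\ r(x^{k+1})-r^{*}\ \le\ r(x^K)-r^{*}\ <\ \gamma\eta^{2},
\]
i.e.\ $\|x^{k+1}-x^{*}\|<\eta$. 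Since $\eta$ is arbitrary, $x^k\to x^{*}$, and $x^{*}$ is a local minimizer by the previous paragraph.

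The main obstacle is the middle step: one has to recognise that $s$-regularity is exactly what is needed to turn the generic first-order condition (already proven) into the second-order sufficient condition (\ref{B-Sufficient-1-N}), because the Clarke tangent cone at any feasible point lives in an $s$-dimensional coordinate subspace. Once strict local minimality with quadratic growth is in hand, the convergence induction is a routine ``small-step trap'' computation; the boundedness step and the connection to Theorem \ref{B-Sufficient-N} carry all the real content.
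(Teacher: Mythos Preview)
Your proof is correct, and the boundedness step and the final identification of $x^{*}$ as a local minimizer via Theorem~\ref{B-Sufficient-N} match the paper. The convergence step, however, follows a genuinely different route. The paper does \emph{not} use the quadratic growth bound (\ref{B-Sufficient-2-N}) to force convergence; instead it first observes that under $s$-regularity the set of $\alpha$-stationary points is \emph{finite} (for each support $\Gamma$ with $|\Gamma|\le s$ the system $A_{\Gamma}^{\top}(A_{\Gamma}x_{\Gamma}-b)=0$ has at most one solution), picks $\epsilon$ smaller than the minimal pairwise distance between these points, and then runs a contradiction argument: if $\{x^k\}$ did not converge, one could extract a subsequence $\{x^{l_n}\}$ with $\|x^{l_n-1}-x^{*}\|\le\epsilon/2$ and $\|x^{l_n}-x^{*}\|>\epsilon$, violating $\|x^{k+1}-x^{k}\|\to 0$. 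Only after convergence is established does the paper invoke Theorem~\ref{B-Sufficient-N} to certify local minimality.

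Your trap argument based on quadratic growth is a perfectly valid alternative and is in some ways more robust---it would still work if the stationary set were merely isolated rather than finite, and it uses Theorem~\ref{B-Sufficient-N} in a single stroke for both convergence and optimality. The paper's finiteness argument, on the other hand, is more elementary (it needs only first-order information for the convergence step) and mirrors the argument of Beck and Eldar that the authors cite. One minor remark: to get $\|x^{k+1}-x^{k}\|\to 0$ from Theorem~\ref{le3.2}(i) you only need $\alpha_k\le\alpha_0$, not a lower bound on $\alpha_k$, so your appeal to the uniform bound on the Armijo exponent, while true, is unnecessary there.
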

\begin{proof}
For completeness, we give the proof similar to that of Theorem 3.2 in \cite{Beck13}.

First, the number of $\alpha$-stationary points of (\ref{p}) is finite.
In fact, by Theorem \ref{th2.1}, $\alpha$-stationary point $x^*$ satisfies $$\nabla_\Gamma r(x^*)=A_{\Gamma}^T(A_{\Gamma}x_{\Gamma}-b)=0,~\Gamma=\mathrm{supp}(x^*),~x_{\Gamma}\geq 0, ~|\Gamma|\leq s, $$
which has at most one solution. Since the number of subsets of $\{1,2,\cdots,N\}$ whose size is no larger than $s$ is $T=\sum_{i=0}^s C_N ^i$,
the number of $\alpha$-stationary points of (\ref{p}) is no more than $T$.

Now we show $\{x^k\}$ is bounded. Lemma \ref{le3.1} indicates that $\{r(x^k)\}$ is decreasing, then
 the sequence $\{x^k\}$ is contained in the level set
$$ E=\{~x\in \mathbb{R}^N_+\cap S~|~r(x)\leq r(x^0)~\}. $$
We can represent the set $E$ as the union $E=\bigcup _{j=1}^T E_j$, where
$E_j=\big\{x\in\mathbb{R}^N_+|\|Ax-b\|^2\leq r(x^0), x_i=0, i\notin\Gamma_j, ~|\Gamma_j|\leq s \big\}.$
Since Assumption \ref{s-regular} implies that $A^\Gamma_{\Gamma_j}A_{\Gamma_j}$ is positive definite, $E_j$ is bounded, which shows $E$ is bounded.
Combining the boundedness of $\{x^k\}$ and $(\romannumeral2)$ in Theorem \ref{le3.2}, we obtain that there is a  subsequence $\{x^{k_n}\}$ converges to an $\alpha$-stationary point $x^*$.

We conclude that $\lim_{k\rightarrow \infty}x^k=x^*$.
Since the number of $\alpha$-stationary points of (\ref{p}) is finite,
there exists an $\epsilon>0$ smaller than the minimal distance between all the pairs of the $\alpha$-stationary points.
We show the convergence of $x^k$ by contradiction. When $n$ is sufficiently large,
$\|x^{k_n}-x^*\|\leq \epsilon$, without loss of generality, we assume the above inequality holds for
all $n\geq 0$.
Since $x^k$ is divergent, the index $l_n$ given by
$$
l_n=\min\{~i~|~\|x^i-x^*\|> \epsilon,i>k_n,~i\in \mathbb{N}~\}
$$
is well defined. We have thus constructed a subsequence $\{x^{l_n}\}$ for which
$$
\|x^{l_n-1}-x^*\|\leq \epsilon, ~\|x^{l_n}-x^*\|>\epsilon, n=1,2,\cdots
$$
It follows that $\{x^{l_n-1}\}$ converges to $x^*$, there exists an $n_0>0$ such that for all $n>n_0$,
$\|x^{l_n-1}-x^*\|\leq \frac{\epsilon}{2}$. Then for all $n>n_0$, $\|x^{l_n-1}-x^{l_n}\|>\frac{\epsilon}{2}$,
contradicting $(\romannumeral1)$ in Theorem \ref{le3.2}.

Finally, by $s$-regularity of $A$ and Theorem \ref{B-Sufficient-N}, it has that $x^*$ is also the local minimizer of $(\ref{p})$. Therefore $\{x^k\}$ converges to a local minimizer of $(\ref{p})$.\qed
\end{proof}

By analyzing the convergence theorems, we can obtain the theorem of existence of optimal solution of (\ref{p}),
which can be regarded as the theorem of second order sufficient optimality condition.

 \begin{theorem}\label{th4.3}
Suppose that Assumption \ref{s-regular} holds for matrix $A$, then the local solutions of problem (\ref{p}) exist and are finite. Moreover, its global solution exists consequently.
\end{theorem}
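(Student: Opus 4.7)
The plan is to split the statement into three sub-claims and verify them in sequence: (i) at least one local minimizer of (\ref{p}) exists; (ii) the set of local minimizers is finite; (iii) a global minimizer exists. Claim (i) is essentially automatic from Theorem \ref{th3.3}: running GSPA from the given $x^0=0$, under Assumption \ref{s-regular} the generated sequence $\{x^k\}$ converges to some $x^*$ that is a local minimizer of (\ref{p}). Hence the set of local minimizers is nonempty.

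For (ii), I would argue that every local minimizer $\bar{x}$ of (\ref{p}) must be an $\alpha$-stationary point for any $0<\alpha<1/L_r$, by the necessary part of Theorem \ref{B-Sufficient-N}. Then Theorem \ref{th2.1} pins $\bar{x}$ down through its support $\Gamma=\textup{supp}(\bar{x})$: the condition $\nabla_\Gamma r(\bar{x})=0$ reads $A_\Gamma^T(A_\Gamma\bar{x}_\Gamma - b)=0$, and under Assumption \ref{s-regular} the Gram matrix $A_\Gamma^T A_\Gamma$ is positive definite, so $\bar{x}_\Gamma=(A_\Gamma^T A_\Gamma)^{-1}A_\Gamma^T b$ is uniquely determined by $\Gamma$. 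Since admissible supports obey $|\Gamma|\le s$ and hence number at most $T=\sum_{i=0}^s C_N^i$, the set of $\alpha$-stationary points, and a fortiori the set of local minimizers, is finite.

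For (iii), I would recycle the boundedness argument already used inside the proof of Theorem \ref{th3.3}: the level set $E=\{x\in S\cap\mathbb{R}^N_+:r(x)\le r(x^0)\}$ decomposes as the finite union $\bigcup_{j=1}^T E_j$ with each $E_j$ bounded thanks to positive definiteness of $A_{\Gamma_j}^T A_{\Gamma_j}$; together with closedness of $S\cap\mathbb{R}^N_+$ and continuity of $r$, this makes $E$ compact. Continuity of $r$ on $E$ then yields an attained minimum, and since $r>r(x^0)$ outside $E$, this minimum is global over the entire feasible region. Equivalently, since every global minimizer is in particular local, and there are only finitely many local minimizers by (ii), one of them with smallest value of $r$ is necessarily global.

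The hard part, and the only step that really needs care, is (ii): the passage from ``local minimizer'' to ``$\alpha$-stationary point labelled uniquely by its support'' bundles together the second order necessary optimality of Theorem \ref{B-Sufficient-N}, the support characterization of Theorem \ref{th2.1}, and the invertibility of $A_\Gamma^T A_\Gamma$ supplied by $s$-regularity. Steps (i) and (iii) are then immediate consequences of Theorem \ref{th3.3} and the compactness of the level set, respectively.
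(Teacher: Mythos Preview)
Your approach is correct and matches the paper's intended route: the paper gives no explicit proof for Theorem~\ref{th4.3} but simply remarks that it follows ``by analyzing the convergence theorems,'' and your argument does precisely that, recycling the two ingredients already established inside the proof of Theorem~\ref{th3.3} (finiteness of $\alpha$-stationary points via $s$-regularity of $A_\Gamma^T A_\Gamma$, and boundedness of the level set $E$). One small caveat: the ``Equivalently'' at the end of your part~(iii) is not a self-contained alternative---knowing only that there are finitely many local minimizers and that $r\ge 0$ does not by itself force the infimum to be attained---so it is your compactness argument that actually does the work there, and the last sentence should be read as a consequence of it rather than an independent proof.
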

\textbf{Remark } We achieve the stronger convergence results (Theorem \ref{le3.2}, \ref{th3.3} and \ref{th4.3})
under relatively weaker assumption compared with \cite{Attouch13,Blumensath09}.
More exactly, the gradient projection algorithm in \cite{Attouch13} converges to a $N$-stationary point provided the iteration sequence is bounded,
while GSPA has the same result without boundedness of the iterative sequence.
NIHT in \cite{Blumensath09} converges to a local minimizer of (\ref{p}) if $A$ satisfies the
 restricted isometry property (RIP) (introduced in \cite{Candes05}), while GSPA has the same convergence result with $s$-regularity of $A$, which is weaker than RIP.

\section{Numerical Experiments}

Before proceeding to the computational results, we need to define some notations and data sets.
For convenience and clear understanding in the graph presentations and some comments, we use the notations: $GSPA$, $NIHT$, $CSMP$ (short for $CoSaMP$) and $SP$ to represent the our Gradient Support Projection Algorithm, Normalized Iterative Hard Thresholding (proposed by Blumensath in \cite{Blumensath09}), Compressive Sampling Matching Pursuit (established by Thomas et al. in \cite{Needell09} ), and Subspace Pursuit in \cite{Dai09}  respectively without nonnegative constraints. We first will compare the numerical performance of $GSPA$ and $NIHT$ under the nonnegative constraints, and thus denote them as $N\_GSPA$ and $N\_NIHT$.

To accelerate the rate of convergence, $\alpha_0$ in each iteration is chosen according to \cite{Blumensath09}
$$
\alpha_0^k=\frac{\|A^T_{\Gamma^k}(b-Ax^k)\|^2}{\|A_{\Gamma^k}A^T_{\Gamma^k}(b-Ax^k)\|^2},$$
For each data set, the random matrix $A$ and the designed vector $b$ in absence of nonnegative constraints are
generated by the following MATLAB codes:
\begin{eqnarray}&&x_{\textmd{orig}} = \textmd{zeros}(N, 1),~~y = \textmd{randperm}(N),~~x_{\textmd{orig}}(y(1 : s)) = \textmd{randn}(s, 1),\nonumber\\
&&A = \textmd{randn}(M, N),~~~b = A*x_{\textmd{orig}}. \nonumber\end{eqnarray}
If considering the nonnegative constraints, we simply alter corresponding code as
$$x_{\textmd{orig}}(y(1 : s)) = \textmd{abs}(\textmd{randn}(s, 1)).$$
where the sparsity $s$ is  taking $s=1\%N$ or $k=5\%N$. In terms of parameters, we fix $\beta=0.8$ and $\sigma=10^{-5}$ for simplicity. For each data set, we will randomly run $40$ samples and the stopping criterias will be set by $\|x^{k+1}-x^{k}\|\leq10^{-6}$ or the maximum iterative times is equal to 5000 for all methods. In the following analysis, we say $x$ as the recovered solution from the affine equations. In whole experiments, the average prediction error $\|Ax-b\|$, the recovered error $\|x-x_{\textmd{orig}}\|_\infty$ and CPU time will be taken into consideration to illustrate the performance of the four methods. All those simulations are carried out on a CPU 2.6GHz laptop.

\subsection{Comparison of $N\_NIHT$ and $N\_GSPA$}
We first will compare $N\_NIHT$ (which can be simply altered by adding the nonnegative constraints $x\geq0$ when pursuing the projection in $NIHT$) and $N\_GSPA$ by setting different $N$ with $s=5\%N$ and running $40$ samples for each data set.
\vspace{3mm}

 \makeatletter
          \def\@captype{figure}
          \makeatother
         {\centering \includegraphics[width=16.2cm]{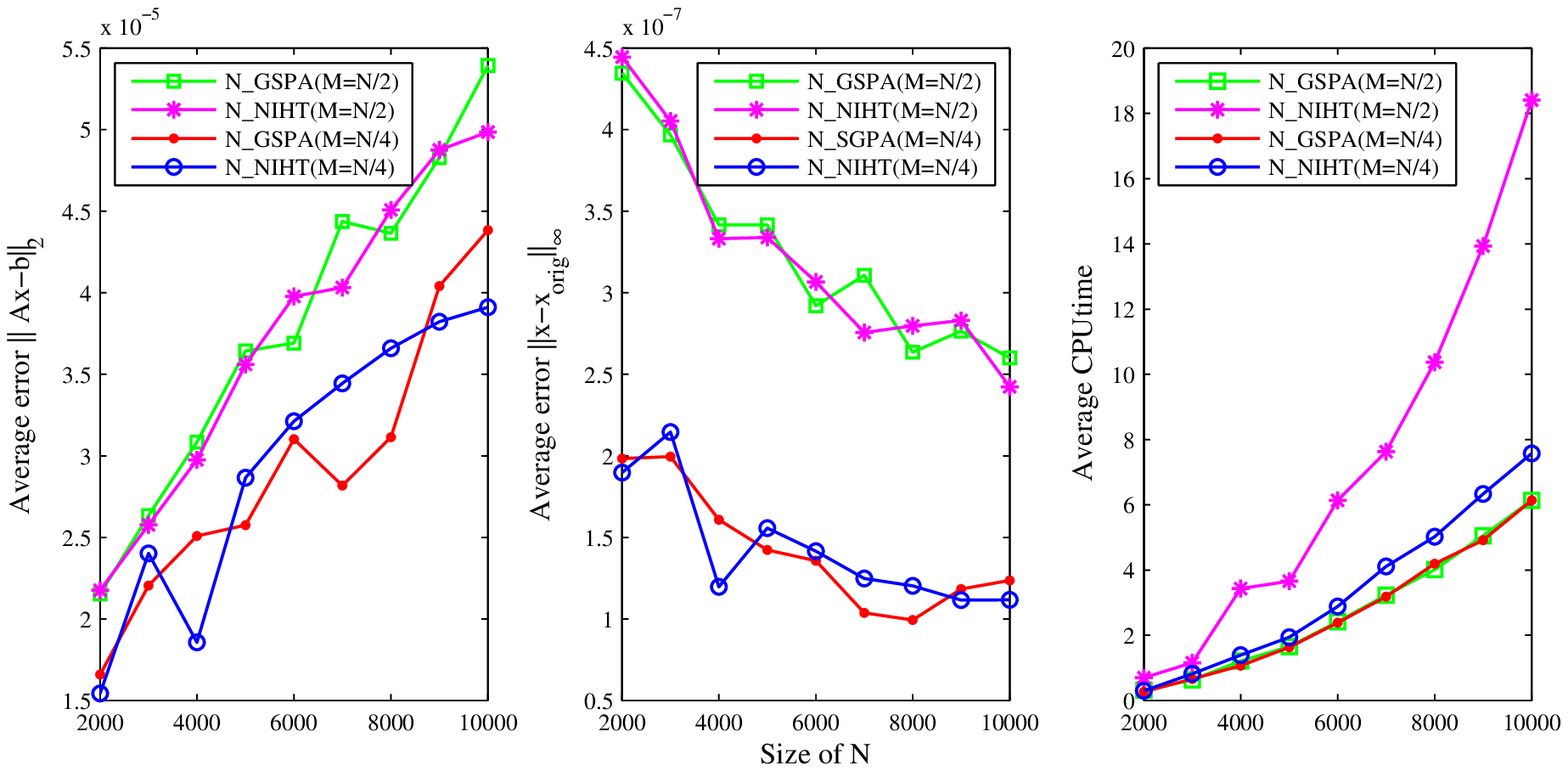}}
          \vspace{-5mm}
          \caption{Average results yielded by $N\_NIHT$  and $N\_GSPA$ under $M=N/2$ and $M=N/4$.\label{fige1}}

\vspace{3mm}

 The corresponding results can be seen in Figure \ref{fige1}, from which one can discern that when $M=N/2$ and $M=N/4$ there is no big difference of these two methods. To be more exact, the average prediction error $\|Ax-b\|$ and recovered error $\|x-x_{\textmd{orig}}\|_\infty$ are quite small with order of magnitude $10^{-5}$ and $10^{-7}$, which means the recover is almost exact. In terms of the average CPU time, see the third column of Figure \ref{fige1}. For one thing, the time cost by $N\_GSPA$ is smaller than that form $N\_NIHT$ for each case. For another, one can check that $N\_NIHT$ are relatively dependent on the sample dimension $M$. The larger the $M$ is, the smaller is the time, which implies  the performance of $N\_NIHT$ tends to be worse with the $M$ decreasing for fixed $N$. By contrast, the performance of our $N\_GSPA$ approach is much more stable since the time cost by this method is nearly similar under different $M$.
\subsection{Comparison of $GSPA$, $NIHT$, $CSMP$ and $SP$}
In the sequent part, we mainly compare $GSPA$, $NIHT$, $CSMP$ and $SP$ without the nonnegative constraints. The primal MATLAB codes of $CSMP$ and $SP$ can be download from the website below:

\noindent http://media.aau.dk/null\_space\_pursuits/2011/07/a-few-corrections-to-cosamp-and-sp-matlab.html.

\textbf{Exact Recovery:}~~We firstly consider the exact sparse recovery $b = Ax_{\textmd{orig}}$. Through running 40 examples, the produced data is listed as Tables \ref{tab1}--\ref{tab3}.

 \tabcaption{The average prediction error $\|Ax-b\|$ over 40 simulations with $s=5\%N$. \label{tab1}}
{\renewcommand\baselinestretch{1.15}\selectfont
{\centering\begin{tabular}{ c c c c c c }\hline
~~~~~$N$~~~~~&~~~~~$M$~~~~~&~~~~~$GSPA$~~~~~&~~~~~$NIHT$~~~~~&~~~~~$CSMP$~~~~~&~~~~~$SP$~~~~~\\\hline
\multirow{2}{*}{~~~$N=1000$}&$M=N/4$&0.14e-04&    0.15e-04&    0.00e-04&    0.00e-04~~~\\
& $M=N/2$&0.09e-04&    0.09e-04&    0.00e-04&    0.00e-04~~~\\\hline
\multirow{2}{*}{~~~$N=3000$} &$M=N/4$&0.29e-04&    0.27e-04&    0.00e-04&    0.00e-04~~~\\
&$M=N/2$&0.17e-04&    0.20e-04&    0.00e-04&    0.00e-04~~~\\\hline
\multirow{2}{*}{~~~$N=5000$} &$M=N/4$ &   0.36e-04&    0.34e-04&    0.00e-04&    0.00e-04~~~\\
&$M=N/2$&    0.23e-04&    0.24e-04&    0.00e-04&    0.00e-04~~~\\\hline
 \multirow{2}{*}{~~~$N=7000$}&$M=N/4$&   0.42e-04&    0.41e-04&    0.00e-04&    0.00e-04~~~\\
  &$M=N/2$&  0.29e-04&    0.32e-04&    0.00e-04&    0.00e-04~~~\\\hline
\multirow{2}{*}{~~~$N=10000$} &$M=N/4$&    0.51e-04&    0.48e-04&    0.00e-04&    0.00e-04~~~\\
 &$M=N/2$&   0.37e-04&    0.39e-04&    0.00e-04&    0.00e-04~~~\\\hline
\end{tabular}\par}}
\vspace{6mm}

 \tabcaption{The average recovered error $\|x_{\textmd{orig}}-x\|_\infty$ over 40 simulations with $s=5\%N$. \label{tab2}}
{\renewcommand\baselinestretch{1.15}\selectfont
{\centering\begin{tabular}{ c c c c c c }\hline
~~~~~$N$~~~~~&~~~~~$M$~~~~~&~~~~~$GSPA$~~~~~&~~~~~$NIHT$~~~~~&~~~~~$CSMP$~~~~~&~~~~~$SP$~~~~~\\\hline
\multirow{2}{*}{~~~$N=1000$}&$M=N/4$&        0.50e-06&    0.51e-06&    0.00e-06&    0.00e-06~~~\\
  & $M=N/2$&  0.20e-06&    0.16e-06&    0.00e-06&    0.00e-06~~~\\\hline
  \multirow{2}{*}{~~~$N=3000$} &$M=N/4$&  0.45e-06&    0.38e-06&    0.00e-06&    0.00e-06~~~\\
   &$M=N/2$& 0.16e-06&    0.17e-06&    0.00e-06&    0.00e-06~~~\\\hline
 \multirow{2}{*}{~~~$N=5000$}  &$M=N/4$&   0.31e-06&    0.30e-06&    0.00e-06&    0.00e-06~~~\\
 &$M=N/2$&   0.12e-06&    0.11e-06&    0.00e-06&    0.00e-06~~~\\\hline
   \multirow{2}{*}{~~~$N=7000$} &$M=N/4$&    0.26e-06&    0.26e-06&    0.00e-06&    0.00e-06~~~\\
   & $M=N/2$&    0.12e-06&    0.12e-06&    0.00e-06&    0.00e-06~~~\\\hline
 \multirow{2}{*}{~~~$N=10000$} &$M=N/4$&   0.24e-06&    0.24e-06&    0.00e-06&    0.00e-06~~~\\
    &$M=N/2$&    0.10e-06&    0.10e-06&    0.00e-06&    0.00e-06~~~\\\hline
\end{tabular}\par}}
\vspace{4mm}

 \tabcaption{The average CPU time over 40 simulations with $s=5\%N$. \label{tab3}}
{\renewcommand\baselinestretch{1.15}\selectfont
{\centering\begin{tabular}{ c c c c c c }\hline
~~~~~$N$~~~~~&~~~~~$M$~~~~~&~~~~~$GSPA$~~~~~&~~~~~$NIHT$~~~~~&~~~~~$CSMP$~~~~~&~~~~~$SP$~~~~~\\\hline
\multirow{2}{*}{~~~$N=1000$}&~~$M=N/4$~~&    0.0689~~&    0.2583~~&    0.1492~~&    0.0961~~~\\
 & $M=N/2$&     0.0677~~&    0.2459~~&    0.1687~~&    0.1307~~~\\\hline
 \multirow{2}{*}{~~~$N=3000$} &~~$M=N/4$~~&     0.5385~~&    3.3210~~&    1.9171~~&    1.1197~~~\\
   &~~$M=N/2~~$&  0.5756~~&    2.6228~~&    1.8754~~&    1.3627~~~\\\hline
 \multirow{2}{*}{~~~$N=5000$}  &~~$M=N/4$~~&   1.5583~~&    11.246~~&    8.0507~~&    4.5900~~~\\
&$~~M=N/2~~$&     1.5114~~&    8.0690~~&    7.7457~~&    5.0981~~~\\\hline
   \multirow{2}{*}{~~~$N=7000$} &~~$M=N/4$~~&    3.0050~~&    20.761~~&    19.698~~&    10.729~~~\\
  & $~~M=N/2~~$&  2.9543~~&    16.389~~&    19.336~~&    12.613~~~\\\hline
   \multirow{2}{*}{~~$N=10000$} &~~$M=N/4$~~&    6.3880~~&    52.257~~&    51.680~~&    27.864~~~\\
    &~~$M=N/2$~~&     5.9462~~&   38.256~~&    53.707~~&    30.924~~~\\\hline
\end{tabular}\par}}
\vspace{5mm}
From Tables \ref{tab1} and \ref{tab2}, although the errors of $\|Ax-b\|$ and $\|x_{\textmd{orig}}-x\|_\infty$ resulted from $CSMP$ and $SP$ are basically equal to zero, the others stemmed from $GSPA$ and $NIHT$ are approximately close to zero as well, and thus there is no big distinction of recovered effects among those four methods. However, one can not be difficult to find that in Table \ref{tab3} the average CPU time cost by $GSPA$ is much lower than those spent by three other methods, which means under such circumstance our proposed approach run extremely fast. For instance when $N=10000$ with $M=N/2~(M=N/4$) and $s=5\%N$, the CPU time only need 5.9462(6.3880) seconds via $GSPA$, while 38.256(52.257), 53.707(51.680), 30.924(27.864) seconds yielded by $NIHT$, $CSMP$ and $SP$ respectively.
\vspace{3mm}

  \makeatletter
          \def\@captype{figure}
          \makeatother
{\centering \includegraphics[width=16.cm]{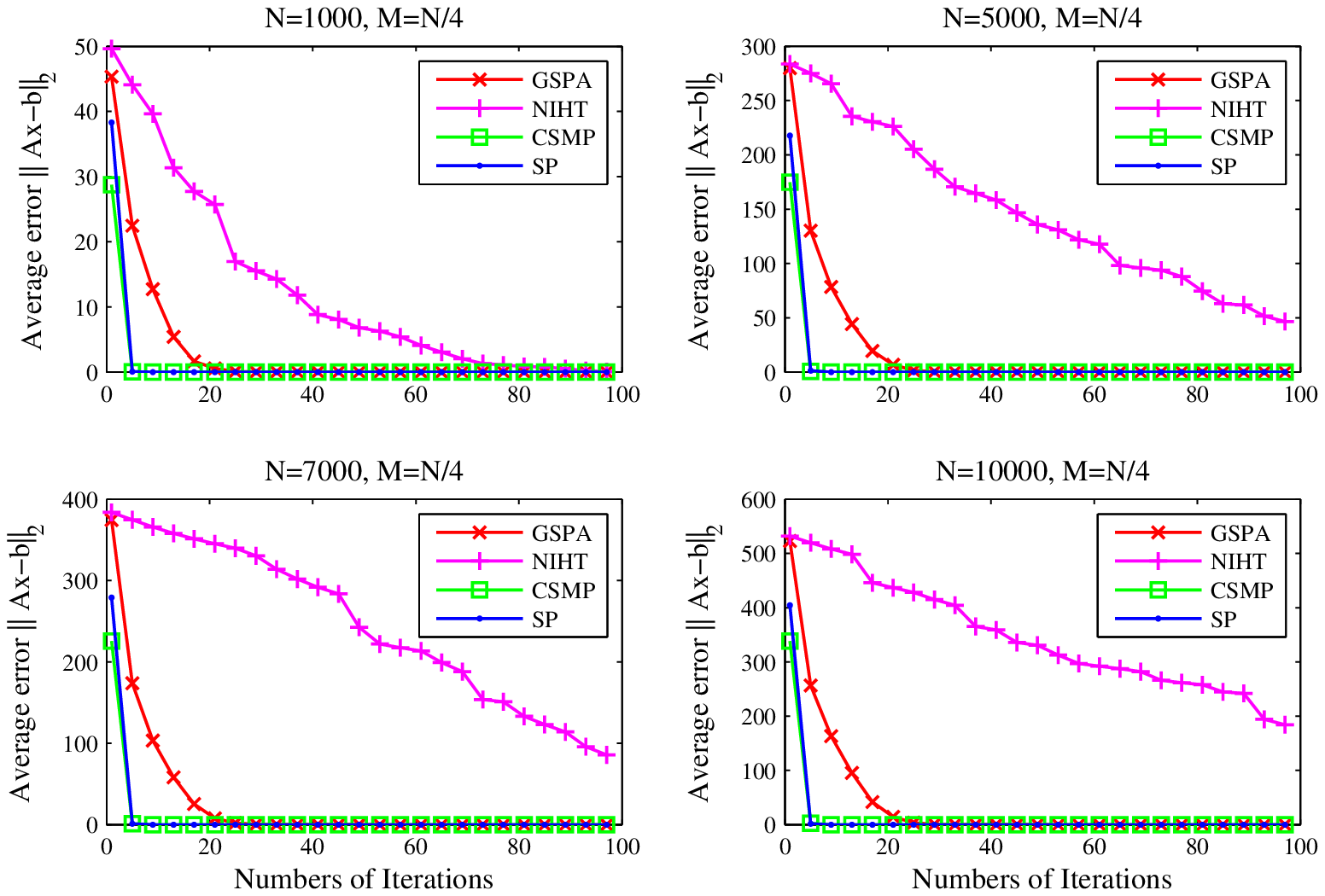}}
\vspace{-5mm}
          \caption{Average prediction error $\|Ax-b\|$ for each iteration with $s=5\%N$ over $40$ simulations.\label{fige5}}
\vspace{2mm}

We then run $40$ simulations to count the average error $\|Ax-b\|$ for each iteration. The first 100 iterations will be taken into account to observe the average descent rate of the error $\|Ax-b\|$ from four algorithms. Seeing Figure \ref{fige5}, $NIHT$ requires the far over 100 iterative times to make the error $\|Ax-b\|$ decline to a desirable level. Compared with that, $GSPA$ almost need 20 iterative times for any dimensions $N$ to reach the lowest level. Even though times of iterations (nearly 7 times for each $N$ ) demanded by $CSMP$ and $SP$ are smallest among these four algorithms, it also indicates that each iteration must cost a relatively long time based on the CPU time in Table \ref{tab3}.
\vspace{1mm}

\textbf{Recovery with Noise:}~~For the sake of clarity of illustrating the robust of these algorithms,  we sequently simulate the recovery with noisy case. Corresponding MATLAB codes are modified to:
\begin{eqnarray}&&x_{\textmd{orig}} = \textmd{zeros}(N, 1),~~y = \textmd{randperm}(N),~~x_{\textmd{orig}}(y(1 : s)) = \textmd{randn}(s, 1),\nonumber\\
&&A = \textmd{randn}(M, N),~~~b = A*x_{\textmd{orig}}+\sigma_0* \textmd{randn}(M,1), \nonumber\end{eqnarray}
where the noise obeys to the normal distribution with zero expectation and $\sigma_0^2$ (taken as $\sigma_0=0.01$ for simplicity) variance. Specific figures produced by these four approaches when $M=N/4$ and $k=5\%N$ are recorded in Table \ref{tab4}, where "-- --" denotes the invalid computation. The most significant property of the data in the table is the recovered effects ($\|Ax-b\|$ or $\|x_{\textmd{orig}}-x\|_\infty$) of $GSPA$, $NIHT$, $CSMP$ and $SP$ are almost nondistinctive. In other words, with noise disturbing, $CSMP$ and $SP$ no longer perform as well as that in absence of noise. Particularly, when the sample size $N\geq5000$, $CSMP$ behaves extremely worse so that it is impossibly implementary in the high dimensional real applications. What makes the results stunning in Table \ref{tab4} is that the average CPU time of $GSPA$ is far of smallness, comparing with time spent by $NIHT$, $CSMP$ and $SP$, which indicates these three methods are not appealing when the affine equations are interfered by some noise, even though the noise is quite minute.

 \tabcaption{Average results over 40 simulations with $M=N/4, s=5\%N$ and noise. \label{tab4}}
{\renewcommand\baselinestretch{1.15}\selectfont
{\centering\begin{tabular}{ c c c c c c }\hline
&~~~~~~~~~$N$~~~~~~~~~&~~~~~~$GSPA$~~~~~~&~~~~~~$NIHT$~~~~~~&~~~~~~$CSMP$~~~~~~&~~~~~~$SP$~~~~~~\\\hline
\multirow{5}{*}{$\|Ax-b\|$} &1000~~&    0.1376&    0.1376&    0.1723 &   0.1376~~~\\
 &3000~~&   0.2505 &   0.2505&    0.3056 &   0.2505~~~\\
 &5000~~&   0.3216&    0.3216&    -- --     &   0.3216~~~\\
  &7000~~&  0.3718&    0.3716&    -- --    &   0.3725~~~\\
  &10000~~&  0.4478&    0.4478&    -- --    &   0.4487~~~\\\hline
\multirow{5}{*}{$\|x_{\textmd{orig}}-x\|_\infty$}&1000~~&    0.0022&     0.0022&     0.0032 &    0.0022~~~\\
&3000~~&    0.0012&     0.0012&     0.0020 &    0.0012~~~ \\
&5000~~&    0.0010&     0.0010&      -- --    &       0.0010~~~\\
&7000~~&    0.0007&     0.0008&      -- --     &  0.0011~~~\\
&10000~~&    0.0008&     0.0008&       -- --    &  0.0009~~~\\\hline
 \multirow{5}{*}{CPU time}&1000~~&   0.0812&    0.3226&    116.87  &  0.1859~~~\\
 &3000~~&   0.5797&    3.9317&    1416.1  &  1.1631~~~\\
  &5000~~&  1.6221&    9.6857&     -- --     &  4.9076~~~\\
 &7000~~&   3.2252&    25.306&    -- --      &  11.556~~~\\
  &10000~~&  6.6369&    38.440&     -- --      &  28.429~~~\\\hline
    \end{tabular}\par}}
\vspace{4mm}

In Figure \ref{fige6}, for the comparison between $GSPA$ and $NIHT$, one can check that the average prediction error $\|Ax-b\|$ begins to close to zero when $GSPA$ iterates nearly 20 steps, which is smaller than that $NIHT$ does. When it comes to compare $GSPA$ and $CSMP$, we reduce the sample size due to the time complexity of $CSMP$ (see Table \ref{tab4}). Although at the beginning the error of $CSMP$  descends dramatically (here 1-10 iterative times has not been plotted in middle of Figure \ref{fige6}), then it almost stabilizes at a small error  and  does not decline to zero again. By contrast, the error from $GSPA$ always drops until to zero. In terms of comparison between $GSPA$ and $SP$, one can observe the iterative times (approximately 7 times) for $SP$ to reach the bottom are relatively small, whilst the error from $GSPA$ requires nearly 10 (30) times when $M=N/2(M=N/4)$ to reach the lowest point. However, meticulous readers are not difficult to find that based on the CPU time in Table \ref{tab4}, the time for each iteration of $SP$ must cost longer than $GSPA$.
\vspace{5mm}

 \makeatletter
\def\@captype{figure}
\makeatother
{\centering \includegraphics[width=15.75cm]{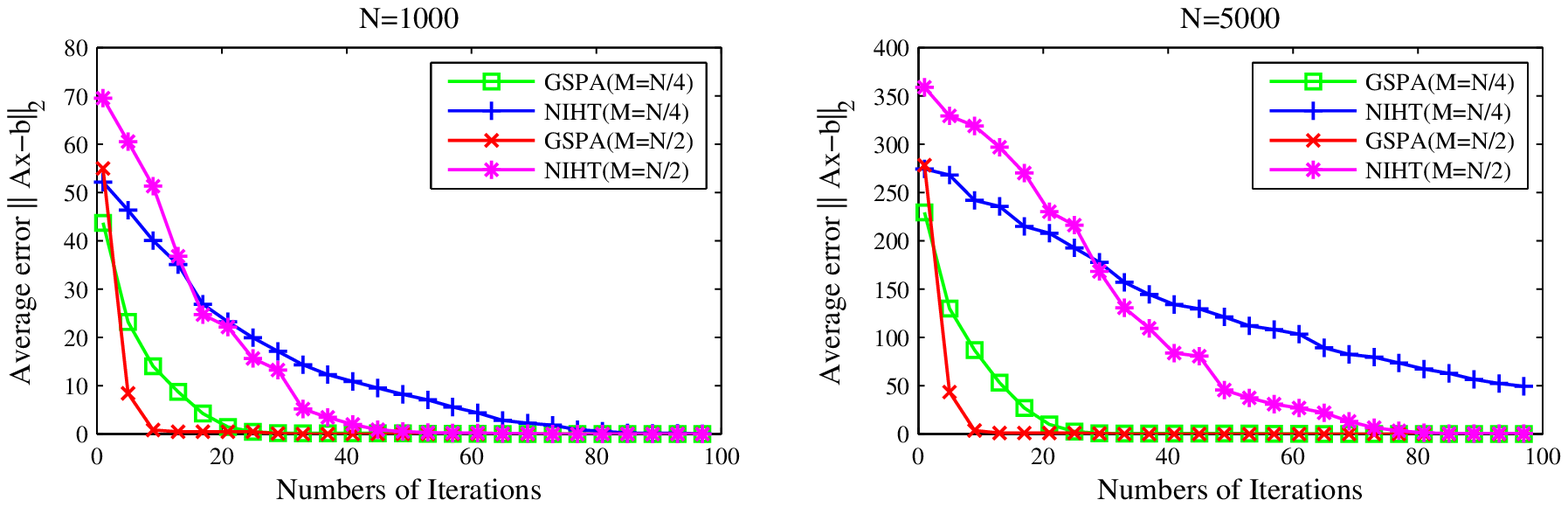}}

 \makeatletter
\def\@captype{figure}
\makeatother
{\centering \includegraphics[width=15.75cm]{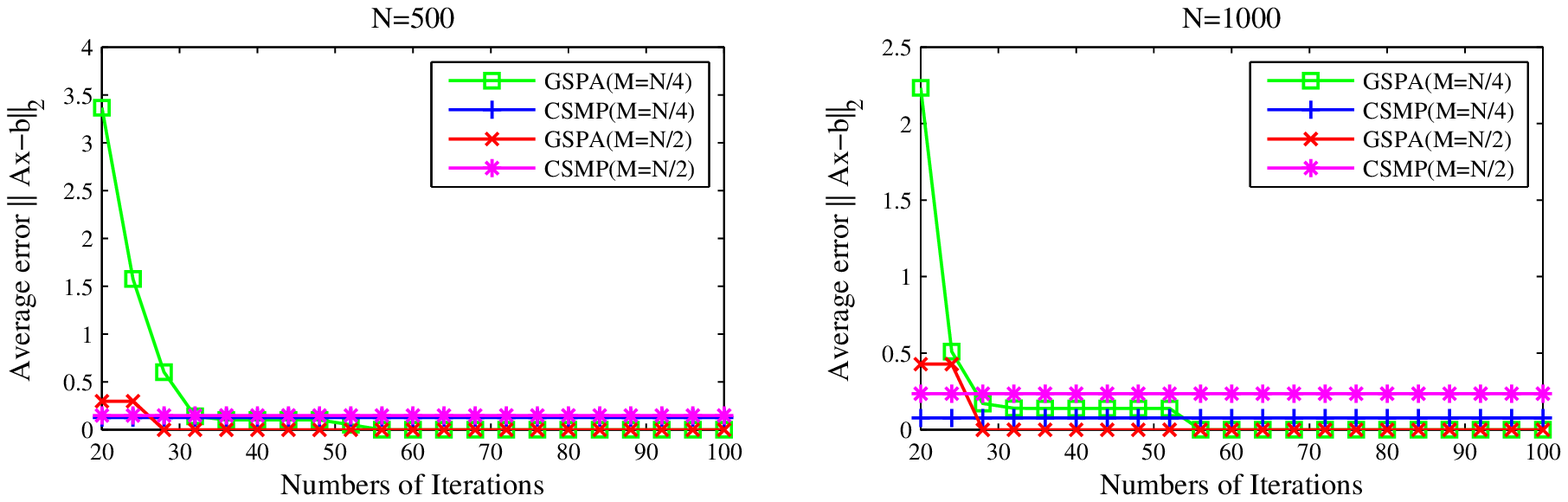}}

 \makeatletter
\def\@captype{figure}
\makeatother
{\centering \includegraphics[width=15.75cm]{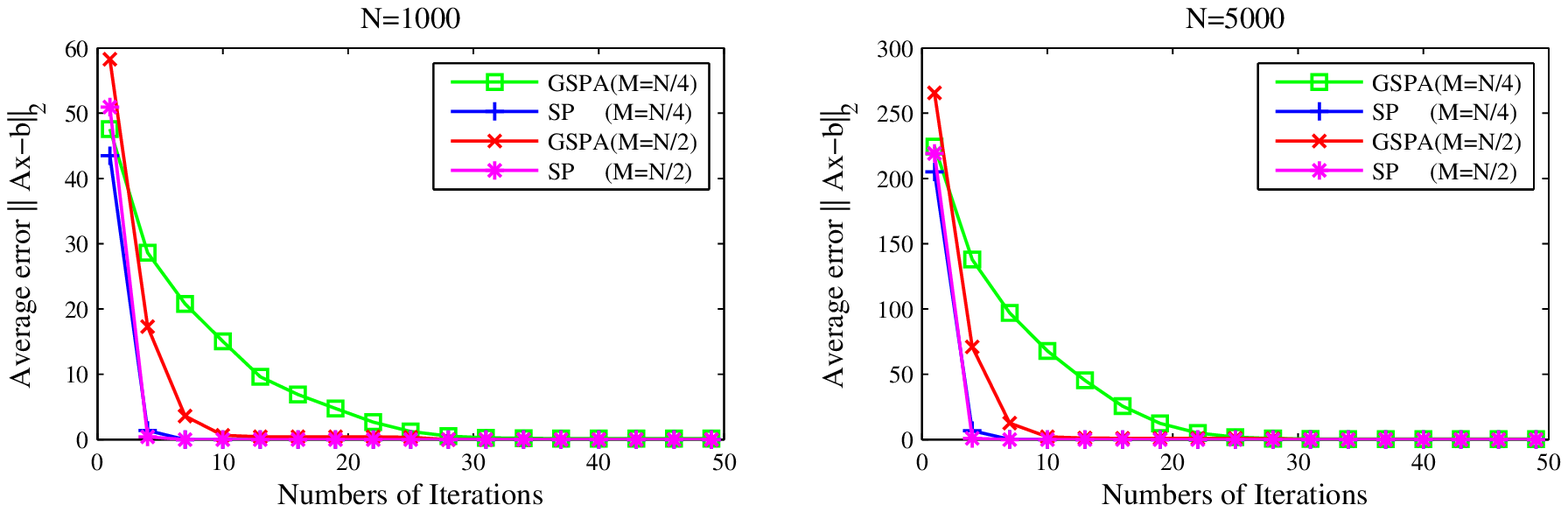}}
 \vspace{-1mm}
          \caption{Average error $\|Ax-b\|$ for each iteration with $s=5\%N$ over $40$ simulations with noise.\label{fige6}}
 \vspace{3mm}

Since the fact that $CSMP$ and $SP$ would perform worse under the relatively larger sparsity of $x_{\textmd{orig}}$, we consider the sparsity $s$ as $s=1\%N$ under the noisy case. The information in Table \ref{tab5} shows that when the sparsity $s$ of $x_{\textmd{orig}}$ is far less than $N$ ($s=1\%N$), $CSMP$ and $SP$ will perform as robustly as $GSPA$ and $NIHT$ do, because the corresponding results in Table \ref{tab5} of these four methods basically tend to be similar.

\vspace{2mm}

 \tabcaption{Average results over 40 simulations with $M=N/4, s=1\%N$ and noise. \label{tab5}}
{\renewcommand\baselinestretch{1.15}\selectfont
{\centering\begin{tabular}{  c c c c c c }\hline
&~~~~~~~~~$N$~~~~~~~~~&~~~~~~$GSPA$~~~~~~&~~~~~~$NIHT$~~~~~~&~~~~~~$CSMP$~~~~~~&~~~~~~$SP$~~~~~~\\\hline
\multirow{5}{*}{$\|Ax-b\|$} &1000&   0.1549  &  0.1548   & 0.1565   & 0.1549~~~\\
    &3000&0.2752  &  0.2752   & 0.2782  &  0.2752~~~\\
    &5000&0.3418  &  0.3418   & 0.3455  &  0.3418~~~\\
    &7000&0.4073  &  0.4073   & 0.4129  &  0.4073~~~\\
    &10000&0.4889 &   0.4889   & 0.4942 &   0.4889~~~\\\hline
\multirow{5}{*}{$\|x_{\textmd{orig}}-x\|_\infty$}&1000& 0.0011 &   0.0011 &   0.0015   & 0.0012~~~\\
    &3000&0.0007&    0.0007 &   0.0010 &   0.0007~~~\\
    &5000&0.0007 &   0.0007 &   0.0009 &   0.0007~~~\\
    &7000&0.0007&    0.0007 &   0.0008 &   0.0007~~~\\
    &10000&0.0006  &  0.0006&    0.0007  &  0.0006~~~\\\hline
\multirow{5}{*}{CPU time}&1000&0.0232  &  0.0543  &  0.0455 &   0.0134~~~\\
    &3000&0.1368 &   0.3334 &   0.1559 &   0.0739~~~\\
    &5000&0.3383&   1.1299 &   0.5412 &   0.1848~~~\\
    &7000&0.6440 &   1.9878 &   1.6966 &   0.5325~~~\\
    &10000& 1.4000&    4.5121&    3.3096  &  1.1757~~~\\\hline
\end{tabular}\par}}
\vspace{4mm}

\subsection{Comments}

 From these two comparisons: comparison of $N\_NIHT$ and $N\_GSPA$ and  comparison of $GSPA$, $NIHT$, $CSMP$ and $SP$, some comments can be concluded.
\begin{itemize}
  \item There is no essential distinction between our $N\_GSPA$ and $GSPA$, because the projection on a nonnegative cone does not obstruct the computational time and recovered effects. From experiments and analysis above, the proposed method $GSPA$ performs very steadily, and thus does not be overly relied on the sample size $M$ and $N$. It also runs relatively well for some different sparsity $s$ of $x_{\textmd{orig}}$. In addition, regardless of the exact recovery and case with noise, $GSPA$ unravels its good robustness. Importantly, $GSPA$ is the most fast of all these four approaches;
  \item For exact recovery, $NIHT$, $CSMP$ and $SP$ all proceed a good performance, particularly the two latter approaches enable the recovery to be exceptionally exact (i.e., making the error $\|Ax-b\|$ and $\|x_{\textmd{orig}}-x\|_\infty$ extremely equal to zero), but the recovered effect of $NIHT$ quite depends on the sample size $M$ and $N$. When referring to the recovery with noise, the recovered effects from $CSMP$ and $SP$ are no longer better than $GSPA$ and $NIHT$, particularly the performance of $CSMP$ which excessively relies on the sparsity $s$ are becoming much worse. Moreover, the CPU time generated by these three methods is all far higher than that needed by $GSPA$, which implies in high dimensional recovery they would not be appealing.
\end{itemize}

\section{Conclusion}

In this paper, we have established the first and second order optimality conditions for model (\ref{p}) and (\ref{f-l-0}),
proposed a gradient support projection algorithm for $AFP_{SN}$, and  shown that the new algorithm has elegant convergence and exceptional performance.
In the future, we will develop this algorithm for solving splitting feasibility problem (by Censor in \cite{Censor94}) with sparsity and other complex constraints.

\section*{Acknowledgements}

We are grateful to Dr. Caihua Chen in Nanjing University for his helpful advice. The work was supported in part by the National Basic Research
Program of China (2010CB732501), and the National
Natural Science Foundation of China (11171018, 71271021).

\end{document}